\newcommand{\cB}{\mathcal{B}}
\newcommand{\cC}{\mathcal{C}}
\newcommand{\cH}{\mathcal{H}}
\newcommand{\cF}{\mathcal{F}}
\newcommand{\NN}{\mathbb{N}}
\newcommand{\TT}{\mathcal{T}}
\newcommand{\Ex}{\mathbb{E}}
\newcommand{\eps}{\varepsilon}
\newcommand{\ex}{\mathrm{ex}}
\newcommand{\exnH}{\ex(n,H)}
\newcommand{\FnH}{\mathcal{F}_n(H)}
\newcommand{\indicator}{\mathbbm{1}}
\newtheorem{thm}{Theorem}
\newtheorem{conj}[thm]{Conjecture}
\newtheorem{lemma}[thm]{Lemma}
\newtheorem{proposition}[thm]{Proposition}
\newtheorem{cor}[thm]{Corollary}
\newtheorem{question}[thm]{Question}
\theoremstyle{definition}
\renewcommand{\le}{\leqslant}
\renewcommand{\ge}{\geqslant}
\title{Supersaturated sparse graphs and hypergraphs}
\author{Asaf Ferber}
\address{Department of Mathematics, Massachusetts Institute of Technology, 77 Massachusetts Avenue, Cambridge, MA 02139.}
\email{ferbera@mit.edu}
\thanks{Research supported in part by an NSF grant 6935855. (AF)}
\author{Gweneth Anne McKinley}
\address{Department of Mathematics, Massachusetts Institute of Technology, 77 Massachusetts Avenue, Cambridge, MA 02139}
\email{gweneth@mit.edu}
\author{Wojciech Samotij}
\address{School of Mathematical Sciences, Tel Aviv University, Tel Aviv 6997801, Israel}
\email{samotij@post.tau.ac.il}
\thanks{Research supported in part by the Israel Science Foundation grant 1147/14 (WS)}
\begin{document}

\begin{abstract}
  A central problem in extremal graph theory is to estimate, for a given graph $H$, the number of $H$-free graphs on a given set of $n$ vertices. In the case when $H$ is not bipartite, fairly precise estimates on this number are known. In particular, thirty years ago, Erd\H{o}s, Frankl, and R\"odl proved that there are $2^{(1+o(1))\exnH}$ such graphs. In the bipartite case, however, nontrivial bounds have been proven only for relatively few special graphs $H$.

  We make a first attempt at addressing this enumeration problem for a general bipartite graph $H$. We show that an upper bound of $2^{O(\exnH)}$ on the number of $H$-free graphs with $n$ vertices follows merely from a rather natural assumption on the growth rate of $n \mapsto \exnH$; an analogous statement remains true when $H$ is a uniform hypergraph. Subsequently, we derive several new results, along with most previously known estimates, as simple corollaries of our theorem. At the heart of our proof lies a general supersaturation statement that extends the seminal work of Erd\H{o}s and Simonovits. The bounds on the number of $H$-free hypergraphs are derived from it using the method of hypergraph containers.
\end{abstract}
	
\maketitle
	
\section{Introduction}

The \emph{extremal number} of a graph $H$, denoted by $\exnH$, is the maximum possible number of edges in a graph $G$ on $n$ vertices which does not contain $H$ as a (not necessarily induced) subgraph. Such a graph $G$ is referred to as $H$-\emph{free}. The study of the asymptotic behavior of $\exnH$ for various $H$ is a central theme in extremal graph theory and goes back to the pioneering work of Tur\'an~\cite{turan}, who determined $\exnH$ exactly in the case when $H$ is a complete graph. In fact, Tur\'an's construction provides a lower bound on $\exnH$ that depends on the \emph{chromatic number} of $H$, denoted by $\chi(H)$, which is the least integer $k$ for which one can partition $V(H)$ into $k$ \emph{independent sets} (that is, sets which induce no edges). More precisely, Tur\'an's construction gives
\[
  \exnH \ge \left(1-\frac{1}{\chi(H)-1}\right)\binom{n}{2}
\]
for every nonempty graph $H$. A matching upper bound was proved several years later by Erd\H{o}s and Stone \cite{EStone}, giving
\begin{equation}
  \label{eq:Erdos-Stone}
  \exnH = \left(1-\frac{1}{\chi(H)-1}\right)\binom{n}{2}+o(n^2).
\end{equation}
Note that~\eqref{eq:Erdos-Stone} determines the asymptotics of $\exnH$ whenever $\chi(H) \ge 3$, but when $\chi(H) = 2$, that is, when $H$ is \emph{bipartite}, it only implies that $\exnH = o(n^2)$, whereas Tur\'an's construction gives the trivial bound $\exnH \ge 0$.

Perhaps unsurprisingly, the bipartite case of Tur\'an's problem is much more challenging and there are only a few bipartite graphs $H$ for which even the order of magnitude of $\exnH$ has been determined. Among the known examples one can find trees, cycles of lengths four, six, and ten, and the complete bipartite graphs $K_{s,t}$ when $s\in\{2,3\}$ or $t<(s-1)!$. For a generic bipartite $H$, there does not even seem to be a good guess for what $\exnH$ might be. The lower bounds in all the above examples are established by rather involved algebraic or geometric constructions. The strongest general upper bound on $\exnH$ is due to F\"uredi~\cite{Fu91} who proved that $\exnH = O(n^{2-1/D})$ if all but one of the vertices in one of the color classes of some proper two-coloring of $H$ have degree at most $D$. This generalizes the classical result of K\H{o}v\'ari, S\'os, and Tur\'an~\cite{KoSoTu54}, who showed that $\ex(n,K_{s,t})=O(n^{2-1/s})$ for all $s$ and $t$. Treating a more general class of graphs than the one considered in \cite{Fu91}, Alon, Krivelevich, and Sudakov~\cite{AlKrSu03} proved that $\exnH = O(n^{2-1/4D})$ for every \emph{$D$-degenerate} bipartite graph $H$ (a graph is $D$-degenerate if every subgraph of it has minimum degree at most $D$). For a more detailed discussion and further references we refer the reader to the excellent survey of F\"uredi and Simonovits~\cite{FuSi13}.

Here we shall be concerned with the closely related problem of enumerating $H$-free graphs. That is, we are interested in the asymptotic size of the set $\FnH$ consisting of all (labeled) $H$-free graphs with vertex set $[n] := \{1, \dotsc, n\}$.  Observing that every subgraph of an $H$-free graph is also $H$-free and that every $n$-vertex $H$-free graph has at most $\exnH$ edges, one obtains the trivial bounds
\begin{equation}
  \label{eq:FnH-trivial}
  2^{\exnH} \le |\FnH| \le \sum_{k=0}^{\exnH} \binom{\binom{n}{2}}{k} \le \left(\frac{e\binom{n}{2}}{\exnH}\right)^{\exnH}.
\end{equation}
This counting problem has been widely studied, and when $H$ is not bipartite, bounds much tighter than \eqref{eq:FnH-trivial} are known. It was proved by Erd\H{o}s, Kleitman, and Rothschild~\cite{ErKlRo76} (when $H$ is a complete graph, but implicitly also for every non-bipartite $H$) and then by Erd\H{o}s, Frankl, and R\"odl \cite{ErFrRo86} that
\begin{equation}
  \label{eq:FnH-EFR}
  |\FnH| = 2^{\exnH + o(n^2)}.
\end{equation}
In particular, if $\chi(H) \ge 3$, then~\eqref{eq:Erdos-Stone}, ~\eqref{eq:FnH-EFR}, and the lower bound in ~\eqref{eq:FnH-trivial} imply that $|\FnH| = 2^{(1+o(1)) \exnH}$. On the other hand, if $H$ is bipartite, then~\eqref{eq:FnH-EFR} is very weak and the trivial upper bound in~\eqref{eq:FnH-trivial} is still the state-of-the-art bound for a generic graph $H$ (up to a constant multiplicative factor in the exponent), giving
\begin{equation}
  \label{eq:FnH-bip-trivial}
  2^{\exnH} \le |\FnH| \le 2^{C \exnH \log n}
\end{equation}
for some positive constant $C$ that depends only on $H$. It is natural to ask whether the $\log n$ factor in the above upper bound can be removed. Indeed, this question was posed by Erd\H{o}s some thirty five years ago (see~\cite{KlWi82}) for all bipartite $H$ that contain a cycle.\footnote{The case when $H$ has no cycles is very different as then $\exnH = O(n)$ while there could be $n$-vertex $H$-free graphs with  as many as $n!$ different labelings. In particular, since there are $2^{\Omega(n \log n)}$ different labeled $n$-vertex graphs with maximum degree one, then $|\FnH| \ge 2^{\Omega(\exnH \cdot \log n)}$ for every acyclic $H$ with maximum degree at least two. Worse still, there are $2^{\Omega(n \log n)}$ non-isomorphic $n$-vertex graphs with maximum degree three.} Until very recently, it was even believed that the stronger bound $|\FnH| \le 2^{(1+o(1)) \exnH}$ holds, as it does for non-bipartite $H$, but this was disproved by Morris and Saxton~\cite{MS} in the case when $H$ is the cycle of length six. In view of this, the following seems to be the right question to ask.

\begin{question}
  \label{question:main}
  Suppose that $H$ is a bipartite graph which contains a cycle. Is there a constant $C$ such that
  \[
    |\FnH| \le 2^{C \exnH}
  \]
  for all $n$?
\end{question}

Despite renewed interest in Question~\ref{question:main} in recent years, very little is known. To the best of our knowledge, it has been answered positively only in the cases when $H$ is the cycle of length four~\cite{KlWi82}, six~\cite{KlWi96}, or ten~\cite{MS}, the complete bipartite graph $K_{s,t}$ with $s \in \{2, 3\}$ or $t > (s-1)!$ (see~\cite{BaSaKmm, BaSa11}), or so-called theta-graphs \cite{Tran}. In this paper we make a first attempt at addressing Question~\ref{question:main} for a generic bipartite graph $H$. Our methods also extend to the setting of uniform hypergraphs, which we shall discuss at the end of this section. The following is our first main result:

\begin{thm}
  \label{thm:main-graphs}
  Let $H$ be an arbitrary graph containing a cycle. Suppose that there are positive constants $\alpha$ and $A$ such that $\exnH \le A n^{\alpha}$ for all $n$. Then there exists a constant $C$ depending only on $\alpha$, $A$, and $H$ such that for all $n$,
  \[
    |\FnH| \le 2^{C n^\alpha}.
  \]
\end{thm}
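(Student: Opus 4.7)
The plan is to combine a \emph{balanced} extension of the Erd\H{o}s--Simonovits supersaturation theorem with the hypergraph container method of Balogh--Morris--Samotij and Saxton--Thomason, following in broad strokes the strategy that Morris and Saxton used for $C_{2k}$. The target supersaturation statement is: there exist constants $K, c, \beta > 0$ (depending only on $H$, $\alpha$, $A$) such that for every $n$-vertex graph $G$ with $m := e(G) \ge K\exnH$, one can find a collection $\cS$ of copies of $H$ in $G$ satisfying $|\cS| \ge c \cdot m \cdot (m/\exnH)^{e(H)-1}$, and such that, for every $j \in \{1,\dots,e(H)\}$ and every set $T$ of $j$ edges of $G$,
\[
  \#\{S \in \cS : T \subseteq E(S)\} \le \beta \cdot \frac{|\cS|}{m} \cdot \left(\frac{\exnH}{m}\right)^{j-1}.
\]
In words, the copies in $\cS$ are ``uniformly spread'' across the edges of $G$ at every level of co-degree.

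To prove this, I would use an iterative deletion argument. Starting from the set of all copies of $H$ in $G$, which is large by the ordinary Erd\H{o}s--Simonovits theorem once $m \gg \exnH$, one repeatedly discards the copies that pass through overweight $j$-sets of edges. The crucial input is the hypothesis $\exnH \le An^\alpha$: it forces each overweight sub-configuration to itself support only a controlled number of copies (since it would otherwise contain a subgraph with more than $\exnH$ edges on some subset of vertices), so the deletion cannot drop $|\cS|$ below the target threshold. With the balanced supersaturation in hand, one applies the container theorem to the $e(H)$-uniform hypergraph on vertex set $E(K_n)$ whose hyperedges are the edge sets of copies of $H$. The co-degree bounds supply precisely the input required by the container lemma, and after $O(\log\log n)$ iterations one obtains a family $\cC$ of subsets of $E(K_n)$ with $|\cC| \le 2^{C_1 n^\alpha}$ and $|C| \le C_2 \exnH$ for every $C \in \cC$, such that every $H$-free graph on $[n]$ is contained in some $C \in \cC$. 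The bound $|\FnH| \le \sum_{C \in \cC} 2^{|C|} \le 2^{O(n^\alpha)}$ follows immediately.

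The central obstacle is the balanced supersaturation step. Classical Erd\H{o}s--Simonovits supersaturation controls only the total count $|\cS|$; it says nothing about how the copies distribute across edges, and a single edge of $G$ could a priori lie in a disproportionate share of the $H$-copies. In the existing treatments of specific bipartite $H$ (such as $C_6$ and $C_{10}$ by Morris--Saxton, or certain complete bipartite graphs by Balogh--Samotij), highly graph-specific counting arguments are used, and it is not obvious that any one of them generalizes. The technical challenge is to extract quantitative balance at every co-degree level $j = 1, \dots, e(H)$ from the purely qualitative hypothesis $\exnH \le An^\alpha$, via a deletion scheme that is insensitive to the structure of $H$ beyond its containing a cycle. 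Making this ``deletion-by-extremality'' argument simultaneously uniform in $j$, while preserving enough of $|\cS|$ to feed into the container machinery, is where the main work must lie.
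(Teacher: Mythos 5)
Your high-level strategy---balanced supersaturation feeding the hypergraph container lemma---is exactly the one the paper uses, and your intuition about where the hypothesis $\exnH\le An^\alpha$ enters (controlling overweight subconfigurations via extremal numbers at all scales) is also in the right spirit. But there are two concrete ideas missing, the first of which is fatal to the argument as written.

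\textbf{The slack factor.} Your proposed co-degree bound,
\[
  \#\{S\in\cS : T\subseteq E(S)\}\le \beta\cdot\frac{|\cS|}{m}\cdot\Bigl(\frac{\exnH}{m}\Bigr)^{j-1},
\]
corresponds to applying the container lemma with parameter $p=\exnH/m$. With this choice, the first application already produces $\bigl(em/(e_H\exnH)\bigr)^{e_H\exnH}=\exp\bigl(O(\exnH\log n)\bigr)$ containers (since $m\approx n^2$ and $\exnH\approx n^\alpha$), and iterating all the way down through $\Theta(\log n)$ container sizes (not $O(\log\log n)$---the containers shrink only by a constant factor $1-\delta$ per application) multiplies these up to $\exp\bigl(O(\exnH(\log n)^2)\bigr)$ leaves, which is worse than the trivial bound. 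This is precisely why the Morris--Saxton Conjecture (Conjecture~\ref{conj:MorrisSaxton} in the paper, which the authors explicitly say they could \emph{not} resolve) carries a $(1+\eps)$ in the exponent. The paper's Theorem~\ref{thm:mainNew} instead proves a deliberately weakened supersaturation with $b_t=M/(t+1)^3$ in place of $M=\exnH$: this polylogarithmic slack, scaled to the level $t$ of the iteration, is what makes the leaf count $\prod_t\bigl(e\gamma^{t+1}(t+1)^3/e_H\bigr)^{e_H M/(t+1)^3}=\exp\bigl(O(M\sum_t t^{-2})\bigr)=2^{O(M)}$ converge (see the computation in Lemma~\ref{lem:containers}). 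Without introducing some such slack, either polynomial in $t$ (the paper) or exponential in $t$ (Morris--Saxton), the container iteration does not close.

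\textbf{The lower-bound input.} The supersaturation argument only works when $\alpha>2-1/m_2(H)$; this is the hypothesis of Theorem~\ref{thm:main} (the hypergraph version) and it is needed so that, in the random-induced-subgraph averaging step, the copies of $H$ in a random $p$-fraction of the vertices dominate the contribution of edges. Your proposal never addresses why this inequality should hold. The paper derives it for any $H$ containing a cycle from the Bohman--Keevash bound $\exnH\ge c_H n^{2-1/m_2(H)}(\log n)^{1/(e_H-1)}$: combined with $\exnH\le An^\alpha$ this forces $\alpha>2-1/m_2(H)$ strictly. Without this step, the reduction of Theorem~\ref{thm:main-graphs} to a supersaturation statement has a genuine gap, because for $\alpha\le 2-1/m_2(H)$ the densest piece of $H$ is not expected to supersaturate at density $n^\alpha$.

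A smaller point: your ``delete overweight configurations'' description is a top-down deletion scheme, whereas the paper builds $\cH_G$ bottom-up, adding one good copy at a time, with the existence of a good copy at each step proved by the Erd\H{o}s--Simonovits trick of averaging over random induced subhypergraphs $G[R]$ and removing one edge from each saturated sub-copy. The bottom-up construction is what makes the bookkeeping of ``saturated'' sets tractable and uniform across all levels $j$; a purely top-down deletion would need a separate argument to show that the surviving family stays both large and balanced.
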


Note that Theorem~\ref{thm:main-graphs} answers Question~\ref{question:main} in the affirmative for every bipartite $H$ such that $\exnH = \Theta(n^{\alpha})$ for some $\alpha$. This is the case for each $H$ for which Question~\ref{question:main} has been answered so far and therefore Theorem \ref{thm:main-graphs} reproves all the previously known results listed above. In fact, it is commonly believed that $\exnH=\Theta(n^\alpha)$ for all bipartite $H$, as conjectured by Erd\H{o}s and Simonovits (see for example~\cite{Er81}):

\begin{conj}
  \label{conj:rational}
  For every nonempty bipartite graph $H$, there exist a rational number $\alpha \in [1,2)$ and $c>0$ such that
  \[
    \frac{\exnH}{n^{\alpha}}\rightarrow c.
  \]
\end{conj}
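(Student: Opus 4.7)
Conjecture~\ref{conj:rational} is a celebrated open problem of Erd\H{o}s and Simonovits, so any attack plan is necessarily speculative; the sketch below describes a three-stage program, noting where each stage meets the barriers that have so far kept the conjecture open. The three stages are: (i) prove the existence of the exponent $\alpha := \lim_{n\to\infty} \log\exnH / \log n$; (ii) show that this $\alpha$ is rational and lies in $[1,2)$; and (iii) upgrade the limit to $\exnH/n^\alpha \to c$ for some $c>0$.

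For stage (i), the plan is to establish a Fekete-type supermultiplicativity for $\exnH$. Given $H$-free graphs $G_1$ on $n_1$ vertices and $G_2$ on $n_2$ vertices, one attempts to produce an $H$-free graph on roughly $n_1 n_2$ vertices with many edges via a tensor or blow-up construction, combined with a random edge-deletion that destroys any new copies of $H$ that cross the two factors. A supermultiplicative inequality of the appropriate form implies, via Fekete's lemma, that $\log\exnH/\log n$ converges to some $\alpha \in \RR$. The bound $\alpha<2$ is immediate from the K\H{o}v\'ari--S\'os--Tur\'an theorem applied to $H \subseteq K_{a,b}$, where $a,b$ are the color classes of $H$, while $\alpha\ge 1$ is elementary via disjoint copies of a nonempty $H$-free seed.

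Stage (ii) is the main obstacle and where every currently available technique appears to fail. Every known sharp lower bound on a bipartite $\exnH$ comes from an algebraic construction---norm graphs, incidence graphs of projective planes, random polynomial graphs---whose edge count is naturally of the form $n^{p/q}$ with $p,q$ dictated by the dimensions of varieties attached to $H$. The goal is to prove that for every bipartite $H$ containing a cycle, some such construction is essentially optimal. A plausible intermediate target is to identify a purely combinatorial invariant $\rho(H)$, for instance a suitable fractional covering or homomorphism-density parameter, and to prove independently that $\alpha = \rho(H)$; an even weaker aim would be to prove the conjecture conditional on Sidorenko's conjecture, under which the natural random construction matches the upper bound up to constants.

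For stage (iii) one would need a bipartite stability theorem: any $n$-vertex $H$-free graph with $(1-o(1))\exnH$ edges must lie within $o(n^\alpha)$ edges of the algebraic model identified in stage (ii). The supersaturation machinery developed in this paper, together with a bipartite regularity-type decomposition, could serve as a natural starting point here: it quantifies how many copies of $H$ must appear once the edge count exceeds $\exnH$ by a fixed multiplicative factor, which is exactly the sort of input a stability argument requires. The central difficulty throughout is the absence of a bipartite analogue of the Erd\H{o}s--Stone theorem: in the non-bipartite regime a single universal construction (Tur\'an's) governs every $H$, whereas in the bipartite setting the conjectural value of $\alpha$ depends on delicate algebraic and arithmetic features of $H$, and no single framework is known to capture them all.
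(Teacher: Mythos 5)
This statement is Conjecture~\ref{conj:rational}, the Erd\H{o}s--Simonovits rational-exponent conjecture; the paper does not prove it. It is quoted as a well-known open problem, with the authors explicitly remarking that ``a solution to either of these conjectures is most likely unattainable in the near future.'' There is therefore no proof in the paper to compare against, and you have correctly identified the situation: your submission is a research program, not a proof, and it says so openly.

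As a program sketch, your three stages are the standard and reasonable way to decompose the problem, but be aware that even stage~(i) -- the mere existence of the limit $\lim_n \log\exnH/\log n$ -- is itself open; no supermultiplicative inequality of the kind you describe is known, because tensor/blow-up constructions do not in general preserve $H$-freeness (a copy of $H$ can appear ``across'' the factors, and no random deletion that destroys all such copies while keeping $\Theta(\ex(n_1,H)\ex(n_2,H))$ edges has been found). Your elementary bounds also need care: $\alpha<2$ via K\H{o}v\'ari--S\'os--Tur\'an is fine, but $\alpha\ge 1$ via disjoint copies fails if $H$ is a single edge (then $\exnH=0$); the conjecture as stated in the paper implicitly excludes such trivial $H$, and your argument should say it needs $H$ to contain a path of length two at minimum. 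None of this is a defect relative to the paper, since the paper offers no argument either; it is simply a reminder that even the weakest piece of your program already sits beyond current technique.
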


Observe that if Conjecture \ref{conj:rational} is true, then Theorem~\ref{thm:main-graphs} resolves Question~\ref{question:main} for all $H$. Actually, the following weaker version of Conjecture \ref{conj:rational} is sufficient. However, a solution to either of these conjectures is most likely unattainable in the near future.

\begin{conj}
  \label{conj:weak}
  For every nonempty bipartite graph $H$, there exist $\alpha \in [1,2]$ and $c_2>c_1>0$ such that
  \[
    c_1\le \frac{\exnH}{n^{\alpha}} \le c_2
  \]
\end{conj}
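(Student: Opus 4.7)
The plan is to reduce the problem to proving matching upper and lower bounds of the form $c_1 n^\alpha \le \exnH \le c_2 n^\alpha$ for a common exponent $\alpha = \alpha(H)$. The upper bound side is already essentially in hand via the methods cited in the introduction. Indeed, any bipartite graph $H$ is $D$-degenerate for some $D = D(H) \le v(H) - 1$, and the Alon--Krivelevich--Sudakov theorem then yields $\exnH = O(n^{2 - 1/(4D)})$. Thus a natural candidate for the upper exponent is $\alpha_+ := 2 - 1/(4D)$, which also guarantees $\alpha_+ \in [1,2)$ (and in fact, one does somewhat better when $H$ has a color class with bounded degree by invoking F\"uredi's refinement).

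For the lower bound, I would try the probabilistic deletion method: fix a small constant $\eps > 0$ and sample $G \sim G(n,p)$ with $p = n^{-\beta}$; then $\Ex|E(G)| \approx \binom{n}{2} p$ and $\Ex(\text{copies of }H) \approx n^{v(H)} p^{e(H)}$. Deleting one edge from each copy gives an $H$-free graph with $\Omega(n^{2-\beta})$ edges provided $n^{v(H)} p^{e(H)} \le \tfrac{1}{2}\binom{n}{2}p$. Optimizing $\beta$ over all subgraphs $H' \subseteq H$ with at least one edge produces
\[
  \exnH \ge c_1 \, n^{2 - m(H)}, \qquad \text{where } m(H) = \max_{H' \subseteq H,\, e(H') \ge 1} \frac{v(H') - 2}{e(H') - 1},
\]
so a candidate lower exponent is $\alpha_- := 2 - m(H)$.

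The real difficulty is that in general $\alpha_- < \alpha_+$, and the conjecture asserts that there is a \emph{single} $\alpha$ that works on both sides. In other words, one needs either: (i) a construction substantially better than random that matches the Alon--Krivelevich--Sudakov upper bound for every bipartite $H$, or (ii) an upper bound technique that matches the deletion-method lower bound for every $H$. Neither is known even for relatively simple cases like $K_{4,4}$, $K_{5,5}$, or generic even cycles. My tentative approach would be to attack (ii) first: try to adapt the dependent random choice argument underlying the Alon--Krivelevich--Sudakov bound by choosing the pivot vertex according to a distribution tuned to the densest subgraph of $H$ (thus incorporating $m(H)$ rather than only $D$), with the aim of producing an upper bound whose exponent depends on the same invariant that governs the random lower bound.

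This is, however, where the main obstacle lies, and I expect the gap between $\alpha_-$ and $\alpha_+$ to be genuinely unbridgeable with current techniques: determining even the correct exponent for $\ex(n, K_{t,t})$ with $t \ge 4$ is a long-standing open problem, and Conjecture~\ref{conj:weak} in particular would resolve it. So realistically my proposal reduces the conjecture to (a) a general lower-bound construction beating the random one, which would likely require new algebraic or norm-graph--type ideas adapted to arbitrary $H$, and (b) a refined supersaturation/counting upper bound sensitive to $m(H)$ rather than degeneracy. Any complete proof must close this gap, and that is why the authors frankly note the conjecture is ``most likely unattainable in the near future.''
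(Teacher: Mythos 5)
The statement you were asked about is Conjecture~\ref{conj:weak}, which the paper explicitly presents as an open conjecture and does not prove; the authors state plainly that ``a solution to either of these conjectures is most likely unattainable in the near future.'' Your write-up correctly recognizes that no proof is possible with current techniques and explains why, so there is no hidden argument in the paper you are missing.

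Your diagnosis of the obstruction matches the paper's own framing. Writing $m(H) = \max_{H'\subseteq H,\, e(H')\ge 2} \frac{v(H')-2}{e(H')-1}$, your deletion-method lower exponent $\alpha_- = 2 - m(H)$ is exactly $2 - 1/m_2(H)$ in the paper's notation, and the paper notes in Section~\ref{sec: main theorems} that the standard probabilistic argument gives $\exnH \ge c_H n^{2-1/m_2(H)}$ (with Bohman--Keevash supplying an extra $(\log n)^{1/(e_H-1)}$ factor, see~\eqref{eq:Bohman-Keevash}, which is the precise reason the paper can afford the strict inequality $\alpha > 2-1/m_2(H)$ in Theorem~\ref{thm:main-graphs}). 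Your upper exponent $\alpha_+ = 2-1/(4D)$ from Alon--Krivelevich--Sudakov is likewise the best currently available in general. The gap between $\alpha_-$ and $\alpha_+$ is real, and the paper's own weaker Conjecture~\ref{conj:separation} --- that $\exnH \ge \eps n^{2-1/m_2(H)+\eps}$ for some $\eps>0$ --- is still open precisely because nobody knows how to improve the random construction for a generic $H$. One small caveat: your route (ii), tightening the upper bound to match the random lower bound, is known to be false for some $H$ (e.g.\ $C_4$, $K_{2,2}$, $K_{3,3}$ all have $\exnH$ strictly above $n^{2-1/m_2(H)}$), so only route (i), a general super-random construction, could plausibly yield a single exponent $\alpha$; this is the content of Conjecture~\ref{conj:separation} and the Bukh--Conlon direction the paper cites. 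In short, your proposal is an accurate assessment of the state of the art rather than a proof, and that is the appropriate response to a statement the paper itself does not prove.
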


On a related note, we would like to mention a recent breakthrough of Bukh and Conlon \cite{BuCo}, who used a random algebraic method, pioneered by Bukh \cite{bukh2015random}, to prove  the following `inverse' version of Conjecture~\ref{conj:weak}: for every rational $\alpha\in [1,2)$, there exists a finite family of graphs $\mathcal L$ for which $\ex(n,\mathcal L)=\Theta(n^{\alpha})$ (where $\ex(n,\mathcal L)$ is the maximum possible number of edges in an $n$-vertex graph that does not contain any member of the family $\mathcal L$).

There are bipartite graphs $H$ for which the best known upper bound on $\exnH$ is of the form $O(n^{\alpha})$, for some explicit $\alpha$, and is conjectured to be tight. For such graphs, it makes sense to establish the bound $|\FnH|\le 2^{O(n^{\alpha})}.$ Indeed, such results have been proved for even cycles~\cite{MS}, complete bipartite graphs~\cite{BaSaKmm, BaSa11}, and theta graphs~\cite{Tran}. All these estimates follow as simple corollaries of Theorem~\ref{thm:main-graphs} and the corresponding upper bounds on the extremal numbers \cite{BoSi74, FaSi83, KoSoTu54}.

Even if the asymptotic behavior of $\exnH$ is unknown, assuming a sufficiently strong lower bound on it, in Theorem~\ref{thm:infiniteSequence}, we are able to prove strong estimates for $|\FnH|$ for an infinite sequence of $n$. A similar result for the number of $k$-arithmetic-progression-free subsets of $[n]$ was obtained by Balogh, Liu, and Sharifzadeh~\cite{BLS}. This result served as an inspiration for our work. Before formally stating the theorem, we recall the notion of \emph{$2$-density} of a graph $H$:
\[
  m_2(H):=\max\left\{\frac{e_F-1}{v_F-1} \colon F\subseteq H, v_F>2\right\}.
\]

\begin{thm}\label{thm:infiniteSequence}
  Let $H$ be a graph and assume that $\exnH \ge \eps n^{2-1/m_2(H)+\eps}$ for some $\eps > 0$ and all $n$. Then there exist a constant $C$ depending only on $\eps$ and $H$ and an infinite sequence of $n$ for which
  \[
    |\FnH|\le 2^{C\cdot \exnH}.
  \]
\end{thm}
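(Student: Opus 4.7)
The natural approach is to combine the paper's supersaturation theorem with the hypergraph container method. Consider the hypergraph $\cH$ on vertex set $E(K_n)$ whose edges are the edge-sets of copies of $H$ in $K_n$; the independent sets of $\cH$ are precisely the members of $\FnH$. The parameter $m_2(H)$ is the $2$-density that governs the container-algorithm estimates for $\cH$: applying the algorithm yields a collection $\mathcal{C}$ of edge-subsets of $E(K_n)$ such that every $G\in\FnH$ is contained in some $C\in\mathcal{C}$, each container is sparse in copies of $H$, and $\log|\mathcal{C}|\le O(n^{2-1/m_2(H)}\log n)$.

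To turn ``sparse in copies of $H$'' into ``few edges,'' I would apply the paper's supersaturation theorem (which extends Erd\H{o}s--Simonovits) to each container. This yields $|E(C)|\le \exnH + T$ for an error $T$ that one can arrange to be $O(n^{2-1/m_2(H)})$ by choosing the container parameters appropriately and iterating the container step $O(\log n)$ times; each iteration costs a factor of $2^{O(n^{2-1/m_2(H)})}$ in the number of containers.

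Combining these bounds gives
\[
|\FnH|\le |\mathcal{C}|\cdot 2^{\max_{C}|E(C)|}\le 2^{O(n^{2-1/m_2(H)}\log n)+\exnH}.
\]
The growth hypothesis now does the work: since $\exnH\ge \eps n^{2-1/m_2(H)+\eps}$, we have $n^{2-1/m_2(H)}\log n\le \eps^{-1} n^{-\eps}\log n\cdot \exnH=o(\exnH)$, so for all sufficiently large $n$ (and hence on an infinite sequence of $n$) we obtain $|\FnH|\le 2^{C\exnH}$ with $C=C(\eps,H)$.

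The main obstacle is guaranteeing containers that are sparse \emph{in edges} rather than merely sparse \emph{in copies of $H$}. A direct application of the container lemma produces containers with $|E(C)|\le \exnH+o(n^2)$, and the $o(n^2)$ error dwarfs $\exnH$ whenever $\exnH=o(n^2)$. The technical crux is to leverage the paper's quantitative supersaturation statement, combined with an iterated container step, to drive this error down to $O(n^{2-1/m_2(H)})$, which under the hypothesis is negligible compared to $\exnH$.
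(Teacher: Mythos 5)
Your proposal describes (at a high level) the container machinery that yields a bound of the form $|\FnH|\le 2^{CM}$ once a suitable balanced supersaturation statement is available, but it skips the actual content of the proof of Theorem~\ref{thm:infiniteSequence}: verifying that the hypothesis of the paper's supersaturation theorem (Theorem~\ref{thm:mainNew}) holds for infinitely many $n$ with $M=\exnH$. That hypothesis is the scaling condition $\ex(s,H)\le M\cdot(s/n)^\alpha$ for \emph{every} $s\le n$, and it is precisely this condition that one cannot establish unconditionally. You cannot obtain containers with $\log|\mathcal{C}|=O(n^{2-1/m_2(H)}\log n)$ by ``applying the algorithm''; that bound requires the pseudorandomness condition~\eqref{eq:pseudorandom} to hold with $p\approx n^{-1/m_2(H)}$, which in turn requires the balanced supersaturation that Theorem~\ref{thm:mainNew} only provides under the scaling hypothesis. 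Without it, there is no a priori reason the error term in your iteration comes down to $O(n^{2-1/m_2(H)})$. Moreover, your argument appears to conclude $|\FnH|\le 2^{C\exnH}$ for \emph{all} sufficiently large $n$; that would resolve Question~\ref{question:main} outright, which the paper makes clear is open. This should have been a warning sign that a hypothesis is being used implicitly.

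The paper's actual proof is a short reduction plus a contradiction argument about the growth of $n\mapsto\exnH$. One sets $M=\exnH$ and $\alpha=r-1/m_r(H)+\eps/2$, and supposes for contradiction that for all $n_0\ge N$ there is some $n_1<n_0$ violating the scaling inequality, i.e.\ $\ex(n_1,H)>\ex(n_0,H)\cdot(n_1/n_0)^\alpha$. Iterating this produces a strictly decreasing chain $n_0>n_1>\cdots$ with $\ex(n_k,H)>\ex(n_0,H)\cdot(n_k/n_0)^\alpha$. Once $n_k$ drops below $n_0^\delta$, the trivial upper bound $\ex(n_k,H)\le\binom{n_k}{r}$ contradicts the assumed lower bound $\exnH\ge\eps n^{r-1/m_r(H)+\eps}$ (applied at $n_0$), provided $N$ is large and $\delta$ is small. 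This shows the scaling hypothesis of Theorem~\ref{thm:mainNew} holds for infinitely many $n$, and for those $n$ one obtains $|\FnH|\le 2^{C\exnH}$. That descent-to-contradiction step is the key missing idea in your proposal.
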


The assumption on $H$ stated in Theorem~\ref{thm:infiniteSequence} is widely believed to hold for every $H$ containing a cycle. In fact, it is known to hold for quite a few bipartite graphs. For example, it is known that for every $\ell$,
\[
  \ex(n,C_{2\ell}) \ge \Omega\left(n^{1 + \frac{2}{3\ell+3}}\right) = \Omega\left(n^{2 - 1/m_2(C_{2\ell}) + \eps_\ell}\right),
\]
where $\eps_\ell > 0$; see, for example, Terlep and Williford~\cite{TeWi12} and the references therein (in particular, the famous papers of Margulis~\cite{Ma88} and Lubotzky, Phillips, and Sarnak~\cite{LuPhSa88}). To give another example, consider the case when $H$ is the $3$-dimensional hypercube graph $Q_3$. Theorem~\ref{thm:infiniteSequence} applies to $H$ because $2 - 1/m_2(Q_3) = 2 - 6/11 < 3/2$ and $\ex(n,Q_3) \ge \ex(n,C_4) = \Omega(n^{3/2})$. As a third example, note that $\ex(n,K_{4,4}) \ge \ex(n,K_{3,3}) = \Omega(n^{5/3})$ and $5/3 > 2 - 7/15 = 2 - 1/m_2(K_{4,4})$ and thus Theorem~\ref{thm:infiniteSequence} also applies with $H = K_{4,4}$. Finally, it follows from the work of Ball and Pepe~\cite{BaPe12} that $K_{5,5}$ and $K_{6,6}$ also satisfy the assumptions of Theorem~\ref{thm:infiniteSequence}.


One may consider a natural extension of Question~\ref{question:main} to the setting of uniform hypergraphs, where $\exnH$ and $\FnH$ are defined in the obvious way. However, the problem of enumerating hypergraphs without a forbidden subhypergraph has only been addressed fairly recently. Generalizing~\eqref{eq:FnH-EFR}, Nagle, R\"odl, and Schacht~\cite{NaRoSc06} proved that for each $r$-uniform hypergraph $H$,
\begin{equation}
  \label{eq:FnH-NRS}
  |\FnH| = 2^{\exnH+o(n^r)}.
\end{equation}
Analogously to the graph case, it is easy to see that an $r$-uniform hypergraph $H$ that is not $r$-partite\footnote{An $r$-uniform hypergraph $H$ is $r$-partite if its vertex set admits a partition into $r$ parts such that every edge of $H$ contains one vertex from each of the parts.} satisfies $\exnH = \Omega(n^r)$. On the other hand, extending the result of K\H{o}v\'ari, S\'os, and Tur\'an~\cite{KoSoTu54} to hypergraphs, Erd\H{o}s~\cite{Er64} proved that for every $r$-partite $r$-uniform $H$, there is an $\eps > 0$ such that $\exnH = O(n^{r-\eps})$. In particular, \eqref{eq:FnH-NRS} implies that $|\FnH| = 2^{(1+o(1)) \exnH}$ for all non-$r$-partite $r$-uniform $H$, but it gives a very weak bound when $H$ is $r$-partite. Therefore, the right generalization of Question~\ref{question:main} to the setting of hypergraphs with uniformity larger than two seems to be the following:

\begin{question}
  \label{question:main-hypergraphs}
  Suppose that $r \ge 3$ and suppose that $H$ is an $r$-partite $r$-uniform hypergraph. Under what conditions can one expect the existence of a constant $C$ such that
  \[
    |\FnH| \le 2^{C \exnH}
  \]
  for all $n$?
\end{question}
As mentioned above, our proof method applies to hypergraphs and both Theorems~\ref{thm:main-graphs} and~\ref{thm:infiniteSequence} extend to this setting. Before stating them formally, we need the following definition, which generalizes the notion of $2$-density to hypergraphs. The \emph{$r$-density} of an $r$-uniform hypergraph $H$, denoted by $m_r(H)$, is defined by
\[
  m_r(H)=\max\left\{\frac{e_F-1}{v_F-r} \colon F\subseteq H,\text{ }v_F>r \right\}.
\]
The hypergraph analog to Theorem \ref{thm:main-graphs} is the following:
\begin{thm}
  \label{thm:main}
  Let $H$ be an $r$-uniform hypergraph and let $\alpha$ and $A$ be positive constants. Suppose that $\alpha > r-1/m_r(H)$ and that $\exnH \le A n^{\alpha}$ for all $n$. Then there exists a constant $C$ depending only on $\alpha$, $A$, and $H$ such that for all $n$,
  \[
    |\FnH| \le 2^{C n^\alpha}.
  \]
\end{thm}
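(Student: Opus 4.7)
My plan is to follow the two-part strategy previewed in the abstract: establish a strong supersaturation statement for $r$-uniform hypergraphs, and then feed it into the hypergraph container method to enumerate $H$-free hypergraphs. Throughout, write $m := \exnH$ and let $\Gamma$ be the auxiliary $e(H)$-uniform hypergraph whose vertex set is $\binom{[n]}{r}$ and whose edges are the copies of $H$ in $K_n^{(r)}$; then $\FnH$ is in bijection with the independent sets of $\Gamma$.

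The first step, and the heart of the argument, will be a quantitative supersaturation lemma extending Erd\H{o}s--Simonovits to the polynomial regime: there are constants $c_0, K_0 > 0$ (depending on $H$, $\alpha$, $A$) such that every $n$-vertex $r$-uniform $G$ with $e(G) \ge K_0 m$ contains at least
\[
  c_0 \cdot e(G) \cdot \left(\frac{e(G)}{n^r}\right)^{e(H)-1} \cdot n^{v(H)-r}
\]
copies of $H$, or a similar bound that scales polynomially with the excess $e(G)/m$. The standard Erd\H{o}s--Simonovits averaging argument yields a bound in which only $e(G)/m \gg 1$ forces many copies, but to feed the container method we need a supersaturation that is polynomial in the density $e(G)/n^r$, uniformly over all sufficiently dense $G$. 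This is where the assumption $\exnH \le A n^\alpha$ is crucial: it controls how fast $m$ grows and lets an averaging-over-random-induced-subhypergraphs argument (or a blow-up / entropy-compression argument) convert "density $\gg m/n^r$" into "many copies," with the correct polynomial exponent $e(H)$.

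Next I would translate the supersaturation lemma into the co-degree hypotheses of the Saxton--Thomason hypergraph container theorem applied to $\Gamma$. The $r$-density $m_r(H)$ controls the maximum co-degree functions $\Delta_j(\Gamma)$: for every $j = 2, \dots, e(H)$ and every $j$-tuple of edges of $K_n^{(r)}$, the number of copies of $H$ containing them is at most $n^{v(H) - r - (j-1)/m_r(H)}$ up to constants, by the definition of $m_r(H)$. This lets me take $\tau$ in the container theorem of order $n^{-1/m_r(H)}$, so that $\tau \cdot |V(\Gamma)|$, the per-step fingerprint size, is of order $n^{r - 1/m_r(H)}$. I would then iterate containers $O(1)$ times (or $O(\log n)$ times, with the standard Balogh--Morris--Samotij / Saxton--Thomason telescoping), stopping once each container $C \subseteq K_n^{(r)}$ satisfies $e(C) \le K_0 m$; the supersaturation lemma is exactly what guarantees that every denser container has enough copies of $H$ for another round to reduce its size by a constant factor.

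The final step is pure counting. The container family $\mathcal{C}$ satisfies $\log|\mathcal{C}| \le C_1 n^{r - 1/m_r(H)} (\log n)^{O(1)}$ and each $C \in \mathcal{C}$ has at most $K_0 m \le K_0 A n^{\alpha}$ edges, whence
\[
  |\FnH| \le \sum_{C \in \mathcal{C}} 2^{e(C)} \le |\mathcal{C}| \cdot 2^{K_0 A n^\alpha}.
\]
Since $\alpha > r - 1/m_r(H)$, the $\log|\mathcal{C}|$ term is $o(n^\alpha)$, giving the desired bound $2^{C n^\alpha}$. The main obstacle will be producing the supersaturation lemma with the correct polynomial dependence on $e(G)/n^r$: the naive Erd\H{o}s--Simonovits bound is not strong enough, and one must work quite hard to get a bound that, together with the $r$-density condition, matches precisely the $\tau \sim n^{-1/m_r(H)}$ threshold of the container method. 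Once this is in place, the container and counting steps are standard.
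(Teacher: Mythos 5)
Your high-level plan — supersaturation plus hypergraph containers — is the right shape, but there is a genuine gap at the step where you feed the supersaturation into the container theorem. You propose to work with the full copy-hypergraph $\Gamma[C]$ restricted to a container $C$, using the trivial ambient bound $\Delta_j(\Gamma) = O\bigl(n^{v_H - r - (j-1)/m_r(H)}\bigr)$ for the codegrees and a total-count supersaturation lemma for the lower bound on $e(\Gamma[C])$. This fails once $|C|$ drops well below $n^r$, because a container $C$ can be \emph{structured}: for instance, $C$ may contain a "clique" $\binom{[s]}{r}$ with $s^r \le |C|$, and then $\Delta_j(\Gamma[C])$ is already as large as $s^{\,v_H - v_F}$ for a densest $F\subsetneq H$ with $e_F = j$, which is far bigger than $p^{\,j-1}\cdot e(\Gamma[C])/|C|$ for any $p\le 1$. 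Concretely, for $H=C_4$, $r=2$ and $|C|\sim n^{3/2}$, a clique on $s\sim n^{3/4}$ vertices inside $C$ forces $\Delta_2(\Gamma[C])\gtrsim n^{3/4}$ while even the optimistic (Erd\H{o}s--Simonovits) supersaturation gives only $e(\Gamma[C])/|C|\gtrsim n^{1/2}$, so the condition $\Delta_2 \le K p\, e/v$ would require $p\gtrsim n^{1/4}$, which is impossible. Total-count supersaturation simply does not control the distribution of copies, and the trivial $\Delta_j$ bound (from the ambient $K_n^{(r)}$) is too weak; one must select a well-distributed \emph{sub}-collection of the copies.

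This is precisely what the paper's main technical result (Theorem~\ref{thm:mainNew}) provides — a \emph{balanced} supersaturation, in the spirit of Morris and Saxton: for each dense $G$ there is a carefully chosen $\cH_G\subseteq\Gamma[G]$ of copies of $H$ with $\Delta_\ell(\cH_G) \le 2^{2e_H+3}(b_t/e(G))^{\ell-1}e(\cH_G)/e(G)$ for all $\ell$. It is constructed greedily, copy by copy, at each step avoiding copies that pass through any sub-configuration $F\subsetneq H$ that is already "saturated" (i.e.\ lies in too many previously selected copies), and a random-induced-subhypergraph averaging argument — exactly the Erd\H{o}s--Simonovits averaging you allude to, but applied to the \emph{good} copies — shows a good copy always exists as long as $e(\cH_G)$ is below the target. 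Two further mismatches with your plan are worth noting: (1) the total count you aim for, $c_0\, e(G)\bigl(e(G)/n^r\bigr)^{e_H-1}n^{v_H-r}$, is the random-graph heuristic and is the content of the Erd\H{o}s--Simonovits supersaturation conjecture, which remains open; the paper gets by with a far smaller count, $(e(G)/b_t)^{e_H-1}e(G)$, because balance, not raw count, is what the container lemma needs. (2) The container iteration is not run with a fixed $\tau\sim n^{-1/m_r(H)}$; rather, the parameter $p = b_t/|G|$ is tuned layer by layer (with $b_t = M/(t+1)^3$), and the number of containers produced across all layers is summed as a convergent series to give $2^{O(M)}$ — this is Lemma~\ref{lem:containers}. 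These are not cosmetic: without balanced supersaturation and the layer-dependent $p$, the argument does not close under the hypothesis $\alpha > r - 1/m_r(H)$.
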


The idea of investigating Question~\ref{question:main-hypergraphs} was suggested in a recent work of Mubyai and Wang~\cite{MuWa}. They conjectured that Question~\ref{question:main-hypergraphs} has an affirmative answer in the case when $H$ is $C_k^{(r)}$, the $r$-uniform expansion\footnote{ Given a graph $G$ and an integer $r \ge 3$, we define the $r$-uniform \emph{expansion} of $G$ to be the hypergraph $G^{(r)}$ with edge set $\{e \cup S_e \colon e \in E(G)\}$, where $\{S_e\}_{e \in E(G)}$ are pairwise disjoint $(r-2)$-element sets disjoint from $V(G)$.} of $C_k$, the ($2$-uniform) cycle of length $k$. Improving upon the result from~\cite{han2017hypergraphs,MuWa}, Balogh, Narayanan, and Skokan~\cite{BaNaSk} have recently solved the conjecture of Wang and Mubayi. As immediate corollaries from Theorem~\ref{thm:main} we reprove this result along with two related estimates for expansions of paths and complete bipartite graphs. For further reading about Tur\'an problems for graph expansions, we refer the reader to a recent survey of Mubayi and Verstra\"ete~\cite{MuVe16} and the references therein. Here is a summary of our results:


%


\begin{cor}
  Suppose that $H$ is any one of the following:
  \begin{enumerate}
  \item
    $P_k^{(r)}$ for some $k, r \ge 3$, or
  \item
    $C_k^{(r)}$ for some $k, r \ge 3$, or
  \item
    $K_{s,t}^{(3)}$ for some $s, t \ge 3$ with $t > (s-1)!$.
  \end{enumerate}
  Then, there exists a constant $C$ depending only on $H$ such that for all $n$,
  \[
    |\FnH| \le 2^{C \cdot \exnH}.
  \]
\end{cor}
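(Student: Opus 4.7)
The plan is to derive each case as a direct application of Theorem~\ref{thm:main}. For a given $H$ the proof requires three ingredients: (i) a polynomial upper bound $\exnH \le An^\alpha$ known from the literature; (ii) verification of the density condition $\alpha > r - 1/m_r(H)$ demanded by Theorem~\ref{thm:main}; and (iii) a matching lower bound $\exnH \ge cn^\alpha$, which converts the conclusion $|\FnH| \le 2^{Cn^\alpha}$ of Theorem~\ref{thm:main} into $|\FnH| \le 2^{(C/c)\cdot\exnH}$.

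For $H = P_k^{(r)}$ or $H = C_k^{(r)}$ with $k, r \ge 3$, the known results of F\"uredi-Jiang-Seiver (paths) and F\"uredi-Jiang (cycles) give $\exnH = O(n^{r-1})$, so we take $\alpha = r-1$. The matching lower bound is supplied by the ``full star'' construction: the $r$-graph on $[n]$ whose edges are all $r$-subsets through a fixed vertex $v$ has $\binom{n-1}{r-1} = \Theta(n^{r-1})$ edges and is $H$-free, since each of $P_k^{(r)}$ and $C_k^{(r)}$ contains two edges with empty intersection (for $k \ge 3$) whose images under any injective embedding into the star would both have to contain $v$. For the density condition, taking $F = H$ itself in the definition of $m_r$ gives $m_r(C_k^{(r)}) \le \frac{k-1}{k(r-1)-r} < 1$ for all $r,k \ge 3$, and a short case analysis (every proper sub-hypergraph is a disjoint union of subpaths, each of ratio exactly $\frac{1}{r-1}$) shows no proper subgraph achieves a larger ratio; the same reasoning handles $P_k^{(r)}$. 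Since $m_r(H) < 1$, one has $r - 1/m_r(H) < r - 1 = \alpha$, as required.

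For $H = K_{s,t}^{(3)}$ with $s, t \ge 3$ and $t > (s-1)!$, the hypergraph generalization of the K\H{o}v\'ari-S\'os-Tur\'an theorem (originally due to Erd\H{o}s) gives $\exnH = O(n^{3-1/s})$, while projective/algebraic constructions analogous to the graph case \cite{BaSaKmm, BaSa11} yield the matching lower bound $\exnH = \Omega(n^{3-1/s})$ in the regime $t > (s-1)!$. Thus $\alpha = 3 - 1/s$. A direct computation bounds $m_3(K_{s,t}^{(3)}) \le \frac{st-1}{s+t+st-3} < 1$, and cross-multiplying verifies $(3-1/s) - (3 - 1/m_3(H)) > 0$ for all $s \ge 2$, i.e., $\alpha > 3 - 1/m_3(H)$.

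The one nontrivial step in the argument is the bookkeeping: namely verifying the density condition $\alpha > r - 1/m_r(H)$ for each family and locating a matching lower bound in the literature. The density check reduces in every case to the observation that $m_r(H) < 1/(r - \alpha)$, which is easy because the forbidden hypergraphs in question are sparse enough that the maximum $2$-density-like quantity is bounded away from the critical value. Once the three inputs are in hand, Theorem~\ref{thm:main} applies verbatim and yields the claimed bound $|\FnH| \le 2^{C \cdot \exnH}$.
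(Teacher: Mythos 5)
Your overall strategy is exactly what the paper intends: for each $H$, locate an $\alpha$ with $\exnH = \Theta(n^\alpha)$ in the literature, verify $\alpha > r - 1/m_r(H)$, apply Theorem~\ref{thm:main}, and use the lower bound $\exnH = \Omega(n^\alpha)$ to convert $2^{Cn^\alpha}$ into $2^{C'\exnH}$. The density computations for paths and cycles (in particular $m_r(P_k^{(r)}) = 1/(r-1)$ and $m_r(C_k^{(r)}) = (k-1)/(k(r-1)-r)$, both less than one) are correct. However, two points need repair.

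First, your argument that the full $r$-star is $C_k^{(r)}$-free rests on the claim that $C_k^{(r)}$ contains two disjoint hyperedges for all $k \ge 3$; this fails precisely for $k=3$. In the expansion of a triangle, every pair of hyperedges shares exactly one vertex, so there are no two disjoint edges. The star is nevertheless $C_3^{(r)}$-free, but for a different reason: no single vertex of $C_3^{(r)}$ lies in all three hyperedges (each original vertex lies in two, each expansion vertex in one), so no vertex can be mapped to the star's center. You should give this argument directly, or simply observe that $H$ embeds in the star if and only if $H$ has a vertex contained in every hyperedge, which is false for all $P_k^{(r)}$ and $C_k^{(r)}$ with $k \ge 3$.

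Second, the attribution for the upper bound on $\ex(n, K_{s,t}^{(3)})$ is off. Erd\H{o}s's hypergraph generalization of K\H{o}v\'ari--S\'os--Tur\'an concerns the complete $r$-partite $r$-uniform hypergraph $K^{(r)}_{s_1,\dots,s_r}$, which is a different object from the expansion $K_{s,t}^{(3)}$; it does not directly yield the exponent you quote. The upper bound for the expansion is due to Mubayi and Verstra\"ete (see their survey cited in the paper and the references therein), and the references \cite{BaSaKmm, BaSa11} you cite for the matching lower bound are counting results for graphs, not Tur\'an-number constructions for $3$-uniform expansions. Before asserting $\alpha = 3 - 1/s$, you should cross-check the exact exponent in the Mubayi--Verstra\"ete literature; fortunately, the density check $m_3(K_{s,t}^{(3)}) < 1$ that you carry out shows $3 - 1/m_3(H) < 2$, so the hypothesis of Theorem~\ref{thm:main} holds for any plausible candidate $\alpha \ge 2$, and the structure of the argument survives regardless of which exponent turns out to be correct.
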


We conclude with the following analog of Theorem~\ref{thm:infiniteSequence} in the hypergraph setting.

\begin{thm}
  \label{thm:infiniteSequenceHyper}
  Let $H$ be an $r$-uniform hypergraph and assume that $\exnH \ge \eps n^{r-1/m_r(H)+\eps}$ for some $\eps > 0$ and all $n$. Then there exist a constant $C$ depending only on $\eps$ and $H$ and an infinite sequence of $n$ for which
  \[
    |\FnH|\le 2^{C\cdot \exnH}.
  \]
\end{thm}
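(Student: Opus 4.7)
The plan is to combine the hypergraph container method with the general supersaturation statement developed earlier in this paper. The containers reduce the enumeration of $H$-free $r$-graphs on $[n]$ to an enumeration of subgraphs inside a small family of ``almost $H$-free'' host graphs, and supersaturation forces each such host to have at most $O(\exnH)$ edges.

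\emph{Step 1 (containers).} Apply the hypergraph container theorem (Balogh--Morris--Samotij, Saxton--Thomason) to the $e_H$-uniform hypergraph on ground set $\binom{[n]}{r}$ whose edges are the edge sets of copies of $H$ in $K_n^{(r)}$. Tuning the supersaturation parameter to $\delta>0$, this produces a family $\cC$ of $r$-graphs on $[n]$ such that every $H$-free $r$-graph on $[n]$ is a subgraph of some $C\in\cC$, each $C$ contains at most $\delta\cdot n^{v_H}$ copies of $H$, and
\[
  \log_2|\cC|\le c_1(\delta,H)\cdot n^{r-1/m_r(H)}\log n.
\]

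\emph{Step 2 (supersaturation).} Invoke the paper's general supersaturation theorem: for every $\delta>0$ there exists $K=K(\delta,H)$ such that any $n$-vertex $r$-graph with more than $K\cdot\exnH$ edges contains more than $\delta\cdot n^{v_H}$ copies of $H$. Combined with the container property, this forces $e(C)\le K\cdot\exnH$ for every $C\in\cC$. Summing over containers,
\[
  |\FnH|\le\sum_{C\in\cC}2^{e(C)}\le|\cC|\cdot 2^{K\exnH}.
\]

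\emph{Step 3 (using the hypothesis).} Taking logarithms and rearranging the hypothesis $\exnH\ge\eps n^{r-1/m_r(H)+\eps}$ into $n^{r-1/m_r(H)}\le\exnH/(\eps n^{\eps})$ yields
\[
  \log_2|\FnH|\le c_1\cdot\frac{\exnH\log n}{\eps n^{\eps}}+K\exnH=(K+o(1))\exnH,
\]
valid for all sufficiently large $n$, which in particular produces an infinite sequence on which the claimed bound holds with $C=K+1$.

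The main obstacle, and the reason Theorem~\ref{thm:infiniteSequenceHyper} leans on the paper's new supersaturation result rather than classical Erd\H{o}s--Simonovits, lies in Step~2: one needs supersaturation whose quantitative dependence is strong enough to bound the edge count of a container by a constant multiple of $\exnH$ (rather than $\exnH^{1+o(1)}$ or $\exnH\cdot\mathrm{polylog}\,n$), uniformly in $H$ under the sole assumption of the growth rate on $\exnH$. Once this supersaturation is available, the container application itself is largely routine, with only the calibration of container parameters to match $m_r(H)$ requiring care.
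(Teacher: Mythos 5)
Your proposal misses the central difficulty of this theorem, and Step~2 invokes a supersaturation statement that the paper does not (and cannot, under the given hypotheses) prove.

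First, the supersaturation. Theorem~\ref{thm:mainNew} is a \emph{balanced} supersaturation result: it produces a collection $\cH_G$ of copies of $H$ satisfying the degree bounds~\eqref{eq:Delta-ell-condition}, and the total count it guarantees is $N = \big((t+1)^3\gamma^{t+1}\big)^{e_H-1}\cdot e(G)$, which for $e(G)=\Theta(\exnH)$ is nowhere near $\Omega(n^{v_H})$. The paper explicitly notes that the count it can guarantee is ``very far from the value conjectured by Erd\H{o}s and Simonovits.'' Your Step~2 statement --- that $e(G)\ge K\cdot\exnH$ forces $\delta n^{v_H}$ copies of $H$ --- is a genuinely different and much stronger claim, and it is not available here: classical Erd\H{o}s--Simonovits supersaturation requires a surplus of $\Omega(n^r)$ edges, not $O(\exnH)=o(n^r)$. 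More importantly, having few copies of $H$ does not force a container to have $O(\exnH)$ edges unless the copies can be taken to be well-distributed, which is why the degree conditions in Theorem~\ref{thm:mainNew} are needed and why Lemma~\ref{lem:containers} is applied iteratively rather than once.

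Second, and more fundamentally, the hypothesis of Theorem~\ref{thm:mainNew} requires a bound of the form $\ex(s,H)\le M\cdot(s/n)^\alpha$ \emph{for every $s\in\{1,\dotsc,n\}$}, because its proof samples a random $pn$-vertex subgraph and needs to control $\ex(pn,H)$. When one only assumes the lower bound $\exnH\ge\eps n^{r-1/m_r(H)+\eps}$ --- without any upper bound on $\exnH$ --- this scaling hypothesis might fail at a given $n$. The entire content of the paper's proof of Theorem~\ref{thm:infiniteSequenceHyper} is to show, by an iterated descent argument, that if the scaling hypothesis failed for all but finitely many $n$, then one could produce a decreasing sequence $n_0 > n_1 > \dotsb$ with $\ex(n_k,H)$ so large that it eventually exceeds $\binom{n_k}{r}$, a contradiction. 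That argument is entirely absent from your proposal. Indeed, your Step~3 concludes that the bound holds ``for all sufficiently large $n$,'' which would prove the unconditional Theorem~\ref{thm:main} and thereby refute Conjecture~\ref{conj:weak}; the fact that the theorem claims only an infinite sequence is precisely because the hypothesis of Theorem~\ref{thm:mainNew} can be verified only along such a sequence.
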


All of our theorems are obtained as (more or less) simple corollaries of the more general but somewhat technical Theorem~\ref{thm:mainNew}, which is stated (and proved) in Section~\ref{sec:supersaturation}.

The rest of the paper is organized as follows: First, in Section~\ref{sec:discussion} we give a short discussion of our proof method, including some comments about previous work. Then, in Section~\ref{sec:containers}, we present the main tool to be used in our proofs, Lemma~\ref{lem:containers}, which is a version of a similar lemma from~\cite{MS} and is based on the method of hypergraph containers developed in~\cite{BMS,ST}. Next, in Section~\ref{sec:supersaturation}, we introduce our main technical theorem, Theorem~\ref{thm:mainNew}, a `balanced supersaturation' result that complements Lemma~\ref{lem:containers}. Next, in Sections~\ref{sec: main theorems} and~\ref{thm:infiniteSequenceHyper}, we prove Theorems~\ref{thm:main-graphs} and~\ref{thm:main} and Theorems~\ref{thm:infiniteSequence} and~\ref{thm:infiniteSequenceHyper}, respectively. Finally, in Section~\ref{sec:remarks}, we close this paper with some concluding remarks and a discussion of future research directions.

\section{Discussion}
\label{sec:discussion}

As we have mentioned in the introduction, enumeration problems in the context of forbidden (hyper)graphs have been successfully addressed for non-bipartite graphs~\cite{BaBoSi04, ErFrRo86, ErKlRo76} and non-$r$-partite $r$-uniform hypergraphs~\cite{NaRoSc06}. A main difficulty in extending the results of~\cite{BaBoSi04, ErFrRo86, ErKlRo76} to the bipartite case is that the proofs in~\cite{BaBoSi04, ErFrRo86} are based on Szemer\'edi's regularity lemma. Even though there are now sparse versions of the regularity lemma, it is unlikely that the regularity approach could be used for counting graphs without a bipartite subgraph. The proof method of~\cite{ErKlRo76} is different, but it hinges on the fact that for non-bipartite $H$, the number of edges in most graphs in $\FnH$ is $n^{2-o(1)}$; this is no longer true when $H$ is bipartite. In the case of $r$-uniform hypergraphs ($r\ge 3$), the situation is even more complicated, as a hypergraph regularity lemma which is sufficiently strong to address the enumeration problem was proved only relatively recently and is quite involved.
	
A nowadays standard way of tackling enumeration problems of this type is by using the method of \emph{hypergraph containers}. This method was introduced by Balogh, Morris, and Samotij~\cite{BMS} and, independently, by Saxton and Thomason~\cite{ST}. In particular, it can be used to reprove~\eqref{eq:FnH-NRS} for all $r$-uniform $H$ in a simple way. The container method essentially reduces the problem of establishing upper bounds on $|\FnH|$ to proving the following statement: If an $n$-vertex graph contains `slightly more' than $\exnH$ edges, then it has `many' copies of $H$ (such property is known as \emph{supersaturation}) that are moreover `well-distributed'.

Keeping this in mind, it seems hopeless to provide a general solution to the counting problem, as it seems crucial to know the order of magnitude of $\exnH$ in order to establish a sufficiently strong supersaturation result. However, Balogh, Liu, and Sharifzadeh~\cite{BLS} have recently managed to settle a question that has a similar flavor without knowing the corresponding extremal function. Specifically, they showed that for infinitely many $n$, there are $2^{\Theta(\Gamma_k(n))}$ many subsets of $[n]$ that do not contain an arithmetic progression of length $k$; here $\Gamma_k(n)$ is the largest cardinality of a subset of $[n]$ without a $k$-term arithmetic progression. We have found this result very surprising, as the asymptotic behavior of $\Gamma_k(n)$ is unknown. It motivated us to investigate whether similar estimates can be obtained for the problem of counting $H$-free graphs. A fact that was crucially used in~\cite{BLS} is that every pair of integers is contained in a constant number of $k$-term arithmetic progressions. This is not the case with copies of a fixed graph $H$ in a large complete graph (and pairs of edges of this complete graph) and this was one of the main challenges that we had to overcome.
	
The main contribution of this work is a general supersaturation theorem for $r$-uniform $r$-partite hypergraphs, Theorem~\ref{thm:mainNew} below. Roughly speaking, it states the following. Suppose that $\exnH = O(n^\alpha)$ for some $\alpha$ such that the expected number of copies of (the densest subgraph of) the forbidden hypergraph $H$ in the random hypergraph with $n$ vertices and $n^\alpha$ edges is of larger order of magnitude than $n^\alpha$. Then every $n$-vertex hypergraph with at least $n^\alpha$ edges contains `many' copies of $H$ which are `well-distributed'. Although the number of copies of $H$ that we can guarantee is still very far from the value conjectured by Erd\H{o}s and Simonovits~\cite{ErSi84}, the lower bound we prove for this quantity is sufficiently strong to allow us to derive a strong upper bound on $\FnH$ using the container method. This was in fact already observed by Morris and Saxton~\cite{MS}, who formulated the following conjecture and showed that it implies a positive answer to Question~\ref{question:main}. For an $r$-uniform hypergraph $\cH$ and $1\le \ell \le r$, let $\Delta_{\ell}(\cH)$ be the maximum number of hyperedges of $\cH$ that contain a given set of $\ell$ vertices.

\begin{conj}
  [{\cite[Conjecture 1.6]{MS}}]
  \label{conj:MorrisSaxton}
  Given a bipartite graph $H$, there exist constants $C > 0$, $\eps > 0$, and $k_0\in \NN$ such that the following holds. Let $k\ge k_0$ and suppose that $G$ is a graph on $n$ vertices with $k\cdot \exnH$ edges. Then there exists a (non-empty) collection $\cH$ of copies of $H$ in $G$, satisfying
  \[
    \Delta_{\ell}(\cH)\le \frac{C\cdot e(\cH)}{k^{(1+\varepsilon)(\ell-1)}} \text{ for all }1\le \ell\le e_H.
  \]
\end{conj}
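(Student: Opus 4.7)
The plan is to combine a cleaning step with a randomized selection of copies of $H$, controlled by tail bounds at every level of codegree. Let $N_0$ be a lower bound on the number of copies of $H$ in $G$ obtained from a convexity/Sidorenko-type supersaturation argument; one expects $N_0\ge k^{e_H}\cdot N_{\mathrm{ext}}$, where $N_{\mathrm{ext}}$ is the number of copies of $H$ in a near-extremal graph. I would first extract a subgraph $G'\subseteq G$ in which every edge lies in many copies of $H$ by iteratively deleting any edge contained in fewer than $T=cN_0/e(G)$ copies of $H$. For $c$ a small constant, the deletions destroy at most a constant fraction of the copies, so $G'$ still contains $\Omega(N_0)$ of them while every surviving edge belongs to at least $T$ of them.

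Next, let $\cH_0$ be the family of all copies of $H$ in $G'$, and form $\cH$ by including each element of $\cH_0$ independently with probability $p=M/|\cH_0|$, where $M$ is the target size. For each $\ell$-set $S$ of edges of $G'$, the number of sampled copies containing $S$ is binomial with mean $d_S\cdot p$, where $d_S$ is the codegree of $S$ in $\cH_0$. A Chernoff estimate yields the bound $\Delta_\ell(\cH)\le C\cdot e(\cH)/k^{(1+\varepsilon)(\ell-1)}$ on the sampled collection as long as $d_S$ does not exceed the target codegree $|\cH_0|/k^{(1+\varepsilon)(\ell-1)}$ by more than a polylogarithmic factor. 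Via a union bound over $\ell$-sets, the task reduces to controlling the \emph{heavy} $\ell$-sets, namely those violating this target.

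The main obstacle, and the real content of the conjecture, is to control the heavy $\ell$-sets for every $1<\ell\le e_H$ simultaneously. My plan is to proceed by induction on $\ell$, processing $\ell=e_H,e_H-1,\dots,2$ in decreasing order, and to kill each heavy $\ell$-set by removing from $\cH_0$ every copy of $H$ through it. The delicate point is that these deletions cascade: removing copies at level $\ell$ can produce new heavy sets at levels below. The initial cleaning provides a uniform lower bound on edge codegrees that limits the importance any one $\ell$-set can accumulate in the deletion sequence, and one hopes to leverage this to keep the total mass removed bounded by a constant fraction of $|\cH_0|$. The anticipated $(1+\varepsilon)$-gain over the naive exponent $\ell-1$ should come from the slack between $|\cH_0|$ and the trivial upper bound on codegrees: in any $n$-vertex graph, an $\ell$-set of edges extends to at most $O(n^{v_H-s})$ copies of $H$, where $s$ is the number of vertices covered by $S$, and this is a factor of $k^{\varepsilon}$ smaller than the target whenever $k$ is sufficiently large in terms of a density invariant of $H$ generalizing the $2$-density used elsewhere in the paper. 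Making this gain quantitative, and propagating it through the inductive cascade without losing more than a constant factor at each level, is the hard technical core of the conjecture and is where I expect the chief obstacle to lie; for specific $H$ where the overlap structure of copies is rigid (even cycles, $K_{s,t}$ with $s\in\{2,3\}$) the corresponding bounds can be verified directly, which suggests that the general case should be accessible by a careful axiomatization of the relevant overlap data of $H$.
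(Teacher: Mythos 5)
This statement is labeled as a \emph{conjecture} in the paper (Conjecture~\ref{conj:MorrisSaxton}, attributed to Morris and Saxton), and the paper does \emph{not} prove it. The authors state explicitly: ``Although we have not succeeded in resolving Conjecture~\ref{conj:MorrisSaxton}, our Theorem~\ref{thm:mainNew} shows that the `balanced supersaturation' property asserted by it holds for every graph $H$ for which Conjecture~\ref{conj:weak} (or the stronger Conjecture~\ref{conj:rational}) is true.'' In other words, the paper only establishes a weaker, conditional version of this statement, under an additional hypothesis on the growth rate of $\ex(n,H)$. So there is no proof in the paper against which to compare your attempt, and a ``proof'' of Conjecture~\ref{conj:MorrisSaxton} would be a new result going beyond what the paper claims.

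Beyond that framing issue, your sketch has genuine gaps that you partly acknowledge yourself. The first step already assumes a supersaturation bound $N_0 \geq k^{e_H}\cdot N_{\mathrm{ext}}$ obtained via a ``convexity/Sidorenko-type'' argument, but this strength of supersaturation is precisely the open Erd\H{o}s--Simonovits conjecture for general bipartite $H$; the paper itself notes that the supersaturation it actually proves ``is still very far from the value conjectured by Erd\H{o}s and Simonovits.'' Without this input, the target size $M$ for your random sample, and hence the Chernoff/union-bound step, is not available. Second, the claimed gain ``$O(n^{v_H-s})$ is a factor $k^{\varepsilon}$ below the target'' is a heuristic that holds only after one already knows something like a separation between $\ex(n,H)$ and $n^{2-1/m_2(H)}$ --- this is exactly the assumption the paper needs to invoke (Conjecture~\ref{conj:weak}), so your plan does not avoid it. Finally, you yourself identify the cascade-of-deletions step as ``the hard technical core of the conjecture'' that you have not carried out; as written, the argument does not establish that the total mass deleted is a bounded fraction of $|\cH_0|$, and no inductive invariant is specified that would control how killing heavy $\ell$-sets creates new heavy $(\ell-1)$-sets. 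So what you have is a plausible program, not a proof, and the program as sketched still relies on unproved supersaturation input of essentially the same difficulty as the conjecture itself.

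For comparison, the paper's conditional result (Theorem~\ref{thm:mainNew}) takes the opposite tack: instead of sampling down from the family of all copies and repairing heavy sets, it builds $\cH_G$ \emph{greedily}, one copy at a time, defining a set $\cB_F$ of ``saturated'' sub-copies $F\subsetneq H$ at each stage and then using a random vertex-subset (Erd\H{o}s--Simonovits averaging) argument to certify that a fresh ``good'' copy, avoiding all saturated sub-configurations, always exists. That greedy construction never needs an a~priori count of all copies of $H$, which is why it can be run under a hypothesis only on $\ex(\cdot,H)$ rather than on copy counts; this sidesteps precisely the missing supersaturation input in your plan.
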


Although we have not succeeded in resolving Conjecture~\ref{conj:MorrisSaxton}, our Theorem~\ref{thm:mainNew} shows that the `balanced supersaturation' property asserted by it holds for every graph $H$ for which Conjecture~\ref{conj:weak} (or the stronger Conjecture~\ref{conj:rational}) is true.

\section{A container lemma}
\label{sec:containers}

Let $H$ be an $r$-uniform hypergraph and let $\cH$ denote the $e_H$-uniform hypergraph whose vertex set is the edge set of the complete $r$-uniform $n$-vertex hypergraph~$K_n^{(r)}$ and whose hyperedges are (the edge sets of) all copies of $H$ in $K_n^{(r)}$. Note that the edge set of every $H$-free hypergraph on $n$ vertices corresponds to an \emph{independent set} in $\cH$ and vice versa. Therefore, any upper bound on the number of independent sets in $\cH$ yields an upper bound on the number of $H$-free hypergraphs.

In order to obtain the desired bound on the number of independent sets, we will use a version of the container lemma due to Balogh, Morris, and Samotij~\cite[Proposition~3.1]{BMS}. Roughly speaking, the lemma states that if the edges of a uniform hypergraph $\cH$ are `well-distributed', then the following holds. There is a `relatively small' collection $\cC$ of subsets of $V(\cH)$ (referred to as \emph{containers}), each of which induces `not too many' hyperedges, such that every independent set of $\cH$ is a subset of at least one container. Here is the formal statement:

\begin{proposition}[Container lemma {\cite[Proposition 3.1]{BMS}}]
  \label{prop:containers}
  Let $\cH$ be a $k$-uniform hypergraph and let $K$ be a constant. There exists a constant $\delta$ depending only on $k$ and $K$ such that the following holds. Suppose that for some $p\in(0,1)$ and all $\ell \in \{1, \dotsc, k\}$,
  \begin{equation}
    \label{eq:pseudorandom}
    \Delta_{\ell}(\cH) \le K\cdot p^{\ell-1}\cdot\frac{e(\cH)}{v(\cH)}.
  \end{equation}
  Then, there exists a family $\cC\subseteq \mathcal P(V(\cH))$ of \emph{containers} with the following properties:
  \begin{enumerate}[label={(\roman*)}]
  \item
    $|\cC| \le \binom{v(\cH)}{\le k p v(\cH)} \le \left(\frac{e}{kp}\right)^{k p v(\cH)}$,
  \item
    $|G| \le (1-\delta) \cdot v(\cH)$ for each $G \in \cC$,
  \item
    each independent set of $\cH$ is contained in some $G \in \cC$.
  \end{enumerate}
\end{proposition}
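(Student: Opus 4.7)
The plan is to use the scythe algorithm of Balogh--Morris--Samotij to attach to each independent set $I$ of $\cH$ a \emph{fingerprint} $S(I) \subseteq I$ of size at most $kpv(\cH)$ together with a \emph{container} $G(I) \supseteq I$ of size at most $(1-\delta)v(\cH)$, with the key feature that $G(I)$ depends only on $S(I)$ and on $\cH$ itself. Setting $\cC := \{G(I) : I \text{ independent in } \cH\}$ then yields (i), (ii), and (iii) simultaneously, since $|\cC|$ is bounded by the number of possible fingerprints $\binom{v(\cH)}{\le kpv(\cH)}$.

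Fix a canonical linear order on $V(\cH)$. The algorithm maintains an active set $A \subseteq V(\cH)$ (initially $V(\cH)$) and a fingerprint $S$ (initially $\emptyset$). At each step, use a deterministic rule depending only on the current $\cH[A]$ (say, pick the vertex of maximum degree, breaking ties by the fixed order) to select a vertex $v \in A$. If $v \notin I$, remove $v$ from $A$. If $v \in I$, add $v$ to $S$, remove $v$ from $A$, and \emph{also} remove a forbidden set $F_v \subseteq A$ defined entirely in terms of $v$, $S$, and $\cH[A]$: roughly, $F_v$ consists of those $u \in A$ for which $S \cup \{u\}$ extends to a hyperedge of $\cH$ via many choices inside $A$. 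Halt once either $|S| \ge kpv(\cH)$ or $\cH[A]$ becomes edge-free, and output $G(I) := A \cup S$.

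The inclusion $I \subseteq G(I)$ follows from an invariant: by the definition of $F_v$ and the independence of $I$, every vertex removed from $A$ is either the currently examined $v$ (whose membership in $I$ is recorded by whether $v \in S$) or cannot belong to $I$ at all. Moreover, given only the fingerprint $S$, one can \emph{simulate} the algorithm without reference to $I$, since at each step the vertex to be examined is a deterministic function of the current $\cH[A]$, and the branch to take is encoded by whether that vertex lies in $S$. Hence $G(I)$ is indeed a function of $S(I)$ alone, as required.

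The main technical obstacle is calibrating the thresholds defining $F_v$ so that both quantitative bounds are met. The idea is a double-counting argument: at each step where $v \in I$ is added to $S$, the pseudorandomness hypothesis \eqref{eq:pseudorandom} forces $|F_v|$ to be roughly $1/p$ times the number of edges of $\cH[A]$ destroyed at that step, which in turn keeps $|S(I)|$ proportional to $pv(\cH)$ while guaranteeing $|A_{\mathrm{final}}| \le (1-\delta)v(\cH)$ before termination. Proper bookkeeping across all co-degrees $\ell = 1, \dotsc, k$ simultaneously -- in particular, accounting for the possibility that the deletion of $v$ alone, without any contribution from $F_v$, could already shrink $\cH[A]$ substantially via the lower-order $\Delta_\ell$ bounds -- is the subtle heart of the argument and dictates the dependence of $\delta$ on $k$ and $K$.
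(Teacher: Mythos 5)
The paper does not prove Proposition~\ref{prop:containers}; it is imported verbatim from Balogh, Morris, and Samotij~\cite{BMS} (their Proposition~3.1), so there is no in-paper argument to compare your proposal against. Your outline does correctly identify the strategy used in~\cite{BMS}: a deterministic ``scythe'' algorithm that, for each independent set $I$, records a small fingerprint $S(I)\subseteq I$ from which the container $G(I)\supseteq I$ can be reconstructed without further reference to $I$, so that $|\cC|$ is bounded by the number of possible fingerprints. The reconstruction invariant you state, and the observation that one needs to simulate the algorithm using only $S$ and $\cH$, are exactly the right organizing ideas.

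As a proof, however, the proposal stops precisely where the substance begins, and you acknowledge this yourself. The entire content of the proposition is in how the thresholds defining the forbidden set $F_v$ are chosen, how the selection rule interacts with the $\ell$-level degree hypothesis~\eqref{eq:pseudorandom} for all $\ell\in\{1,\dotsc,k\}$ simultaneously, and how one proves that the process halts with $|A_{\mathrm{final}}|\le(1-\delta)v(\cH)$ while $|S|\le kpv(\cH)$ --- this is what determines $\delta$ as a function of $k$ and $K$. Your description of $F_v$ (``those $u\in A$ for which $S\cup\{u\}$ extends to a hyperedge via many choices inside $A$'') is loose enough that it cannot be checked against the bounds: the~\cite{BMS} argument actually runs an induction on the uniformity $k$, maintaining link structures of the accumulated fingerprint, and it is the bookkeeping that transfers the bound at level $\ell$ into edge destruction at level $\ell+1$ that makes the numbers come out. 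None of that is carried out here, so the proposal should be read as a faithful high-level summary of the intended proof rather than a proof.
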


Clearly, the smaller the $p$ we choose, the stronger the upper bound on the number of containers. On the other hand, as we decrease $p$, it becomes more difficult to satisfy the `density' condition \eqref{eq:pseudorandom}.

To illustrate how the container lemma can be applied in our setting, let us assume that we have an upper bound of $O(M)$ on the largest size of a container and that~\eqref{eq:pseudorandom} is fulfilled with $p$ satisfying $p\log \frac 1p =O(M/v(\cH))$. Then, we immediately obtain
\[
  |\FnH|\le |\cC|\cdot 2^{O(M)}=2^{O(M)}.
\]
Since one does not obtain strong bounds on the largest size of a container after one application of Proposition~\ref{prop:containers}, it is natural to iterate it. Specifically, given a candidate $G$ for a final container, we can either decide to keep it (if $G$ is small enough for our purposes) or invoke Proposition~\ref{prop:containers} to the induced subhypergraph $\cH[G]$ to break $G$ down further. In order for this recursive process not to produce too many containers, we must prove that $\cH[G]$ fulfills~\eqref{eq:pseudorandom} with a `relatively small' $p$. Unfortunately, since we do not know anything about the structure of $G$, such a statement might be very hard, or even impossible to prove.

In order to overcome this difficulty, we employ the following simple, yet powerful strategy that was first used in this context by Morris and Saxton \cite{MS}. Given any subhypergraph $\cH_G \subseteq \cH[G]$, every independent set in $\cH[G]$ is also independent in $\cH_G$. Hence, any upper bound on the number of independent sets in $\cH_G$ is also an upper bound on the number of independent sets in $\cH[G]$. It thus follows that even if $\cH[G]$ does not fulfill \eqref{eq:pseudorandom}, we might hope to find a suitable subhypergraph $\cH_G\subseteq \cH[G]$ which does satisfy this condition, enabling us to continue the iteration.

With this strategy in mind, we are first going to show how the existence of such $\cH_G$ for every $G$ implies the desired upper bound on the number of independent sets. A similar statement appears in~\cite{MS}, but since we consider hypergraphs here as well (as opposed to~\cite{MS}), for the convenience of the reader and in order to keep this paper self-contained, we include a full proof.

\begin{lemma}
  \label{lem:containers}
  Let $H$ be a nonempty $r$-uniform hypergraph and let $\cH$ be the $e_H$-uniform hypergraph comprising (the edge sets of) all copies of $H$ in $K_n^{(r)}$. Let $K$ be a constant and let $\gamma=\frac{1}{1-\delta}$, where $\delta:=\delta(e_H, K)$ is defined in Proposition~\ref{prop:containers}. Suppose that for a given $n\in\NN$, there exist $M$ and $t_0$ such that the following holds: for all integers~$t \ge t_0$ and all $G \subseteq V(\cH)$ satisfying
  \[
    \gamma^t M < |G|\le \gamma^{t+1} M,
  \]
  there exists a subhypergraph $\cH_G\subseteq \cH[G]$ for which
  \begin{equation}
    \label{eq:container-hypotheses}
    \Delta_\ell(\cH_G)\le K\cdot \left(\frac{b_t}{|G|}\right)^{\ell -1}\cdot \frac{e(\cH_G)}{|G|}
  \end{equation}
   where $b_t = \frac{M}{(t+1)^3}$,
  for all  $\ell\in\{1, \dotsc, e_H\}$. Then there is a constant $C$ depending only on $K$, $t_0$, and $e_H$ such that $|\FnH| \le 2^{C\cdot M}$.
\end{lemma}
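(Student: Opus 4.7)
The plan is to iterate Proposition~\ref{prop:containers} in the standard container-method fashion, using the hypothesis of the lemma as a black box that supplies, at every scale, a subhypergraph satisfying the codegree condition needed to apply the proposition. Concretely, I build a rooted tree of containers: the root is $V(\cH)$, and at each internal node $G$ with $|G| > \gamma^{t_0+1} M$, I pick the unique $t \ge t_0$ with $\gamma^t M < |G| \le \gamma^{t+1} M$, invoke the hypothesis to obtain $\cH_G \subseteq \cH[G]$ satisfying \eqref{eq:container-hypotheses}, and apply Proposition~\ref{prop:containers} to $\cH_G$ with $p_t = b_t / |G|$. By conclusion (ii) of the proposition, every child container has size at most $(1-\delta)|G| \le |G|/\gamma \le \gamma^t M$, so the relevant value of $t$ strictly decreases along any root-to-leaf path. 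The recursion terminates once $|G| \le \gamma^{t_0+1} M$, producing leaves of size $O(M)$ (with constant depending only on $K$ and $e_H$ through $\gamma$ and $t_0$). Since every independent set of $\cH$ descends through the tree into some leaf by conclusion (iii) applied repeatedly, and since $H$-free hypergraphs on $[n]$ correspond to independent sets of $\cH$, it suffices to bound the number of leaves.

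The key calculation is the branching bound. At level $t$, a single split of $G$ with $|G| \le \gamma^{t+1} M$ produces, by conclusion (i) of Proposition~\ref{prop:containers}, at most $\bigl(e/(e_H p_t)\bigr)^{e_H p_t |G|}$ new containers. Substituting $b_t = M/(t+1)^3$ gives the exponent $e_H p_t |G| = e_H b_t = e_H M/(t+1)^3$ and the base $e|G|/(e_H b_t) \le e \gamma^{t+1}(t+1)^3/e_H$. Hence the logarithm (base $2$) of the branching factor at level $t$ is at most
\[
\frac{e_H M}{(t+1)^3}\cdot \log_2\!\Bigl(\tfrac{e\gamma^{t+1}(t+1)^3}{e_H}\Bigr) \;=\; O\!\left(\frac{M}{(t+1)^2}\right),
\]
where the implicit constant depends only on $e_H$ and $K$.

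Multiplying along one root-to-leaf path (which has depth at most $O(\log n)$, a quantity that harmlessly disappears in the sum), the total log of the product of branching factors is at most
\[
\sum_{t=t_0}^{\infty} O\!\left(\frac{M}{(t+1)^2}\right) \;=\; O(M),
\]
and this bound applies uniformly since the tree is finite. Thus the number of leaves is at most $2^{C_1 M}$ for some $C_1 = C_1(K, t_0, e_H)$. Each leaf, being of size at most $\gamma^{t_0+1} M = C_2 M$, has at most $2^{C_2 M}$ subsets, so
\[
|\FnH| \;\le\; 2^{C_1 M}\cdot 2^{C_2 M} \;=\; 2^{C\cdot M}
\]
with $C = C_1 + C_2$, as claimed.

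The main obstacle is purely bookkeeping: one must verify that the exponent $M/(t+1)^3$ in the definition of $b_t$, when traded against the factor of $(t+1)^3$ appearing inside the logarithm of the branching base, produces a summable series in $t$. The cubic weight is precisely calibrated for this; anything of the form $(t+1)^{1+\eta}$ for $\eta > 0$ would also suffice. Beyond this, one should verify that the recursion indeed terminates (which is immediate since $t$ strictly decreases and is bounded below by $t_0$) and that the level-indexed hypothesis is invoked only for $t \ge t_0$ (guaranteed by the termination condition). The hypothesis \eqref{eq:container-hypotheses} has been written in exactly the form matching~\eqref{eq:pseudorandom} with $p = b_t/|G|$, so no additional work is required to match the codegree bounds.
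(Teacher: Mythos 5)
Your proof is correct and follows essentially the same approach as the paper: build a container tree by iteratively applying Proposition~\ref{prop:containers} to the hypothesis-supplied subhypergraph $\cH_G$ with $p=b_t/|G|$, observe that the level index $t$ strictly decreases along root-to-leaf paths, and bound the number of leaves by the product of per-level branching factors, whose logarithm is $\sum_t O(M/(t+1)^2)=O(M)$. The only cosmetic difference is that you terminate at $|G|\le\gamma^{t_0+1}M$ rather than $\gamma^{t_0}M$, which merely changes the constant in the final bound.
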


\begin{proof}
  We are going to prove the claimed upper bound on $|\FnH|$ by constructing a collection of $2^{O(M)}$ containers for independent sets in $\cH$, each of size $O(M)$. We start with the trivial container $V(\cH)$ which we break down into smaller containers by repeatedly applying Proposition~\ref{prop:containers} to the subhypergraphs $\cH_G$ from the assumption of the lemma. Formally, we shall construct a rooted tree $\TT$ whose vertices are subsets of $V(\cH)$, that is, subgraphs of $K_n^{(r)}$, with the following properties:
  \begin{enumerate}[label=(T\arabic*)]
  \item
    The root of $\TT$ is $V(\cH)$.
  \item
    \label{item:T2}
    If $G$ is a non-leaf vertex of $\TT$, then every independent set of $\cH[G]$ is an independent set of $\cH[G']$ for some child $G'$ of $G$ in $\TT$.
  \item
    Every leaf of $\TT$ is a subset of $V(\cH)$ with at most $\gamma^{t_0} M$ elements.
  \end{enumerate}
  The existence of such a tree $\TT$ clearly implies that
  \begin{equation}
    \label{eq:FnH-leaves-TT}
    |\FnH| \le \text{\#leaves of $\TT$} \cdot 2^{\gamma^{t_0} M}.
  \end{equation}
  We construct $\TT$ greedily by starting from a tree comprising just the root $V(\cH)$ and repeatedly `splitting' every leaf vertex that corresponds to a subset of $V(\cH)$ with more than $\gamma^{t_0} M$ elements. Suppose that $G$ is such a subset and let $t \ge t_0$ be the unique integer such that
  \begin{equation}
    \label{eq:t-G-def}
    \gamma^t M < |G| \le \gamma^{t+1} M.
  \end{equation}
  By our assumption, there is a subhypergraph $\cH_G\subseteq \cH[G]$ that satisfies condition~\eqref{eq:container-hypotheses}. Observe that if we let $p=\frac{b_t}{|G|}$, then we obtain precisely~\eqref{eq:pseudorandom}. Therefore, we can apply Proposition~\ref{prop:containers} to $\cH_G$ and obtain a family $\cC_G$ of subsets of $G$ such that
  \begin{enumerate}[label=(\roman*)]
  \item
    $|\cC_G| \le \left(\frac{e |G|}{e_H b_t}\right)^{e_H b_t} \le \left(\frac{e \gamma^{t+1} M}{e_H b_t}\right)^{e_H b_t}$,
  \item
    \label{item:G-split-2}
    $|G'| \le (1-\delta) \cdot |G| \le \gamma^t M$ for every $G' \in \cC_G$,
  \end{enumerate}
  and such that~\ref{item:T2} holds for $G$, as every independent set in $\cH[G]$ is still independent in $\cH_G$. Note that~\ref{item:G-split-2} implies that as $G$ ranges over the vertices of any path from the root to a leaf of $\TT$, the sequence of $t$ satisfying~\eqref{eq:t-G-def} is strictly decreasing. Moreover, $t \le T$, where $T$ is the smallest integer satisfying $\gamma^T M > v(\cH)$. It follows that
  \begin{equation}
    \label{eq:leaves-T}
    \begin{split}
      \text{\#leaves of $\TT$} & \le \prod_{t = t_0}^T \left(\frac{e \gamma^{t+1} M}{e_H b_t}\right)^{e_H b_t} \le \prod_{t=t_0}^T \left(\frac{e\gamma^{t+1}(t+1)^3}{e_H}\right)^{\frac{e_H M}{(t+1)^3}} \le \prod_{t=t_0}^T \left(A^{t+1}\right)^{\frac{e_H M}{(t+1)^3}} \\
      & \le \exp\left(e_H M \cdot \log A \cdot \sum_{t = 1}^{\infty} \frac{1}{t^2} \right) \le 2^{(C-\gamma^{t_0}) M},
    \end{split}
  \end{equation}
  where $A$ and $C$ are constants depending only on $\gamma$, $e_H$, and $t_0$. The assertion of the lemma now follows from~\eqref{eq:FnH-leaves-TT} and~\eqref{eq:leaves-T}.
\end{proof}

\section{Supersaturation}
\label{sec:supersaturation}

In this section we establish our supersaturation statement for copies of a fixed hypergraph $H$. We shall be able to prove, for every $n$-vertex hypergraph $G$, the existence of an $\cH_G$ as in the discussion before Lemma~\ref{lem:containers} using only a relatively mild and natural assumption on the growth rate of $\ex(s, H)$ for all $s$ below some given $n$. As in the argument of~\cite{MS}, we build $\cH_G$ by adding suitable copies of $H$ in $G$ one by one. The following technical statement is the main contribution of our work. The key idea in its proof, a double counting argument based on averaging over induced subhypergraphs of $G$, can be traced back to the seminal work of Erd\H{o}s and Simonovits~\cite{ErSi83}.

\begin{thm}
  \label{thm:mainNew}
  Let $H$ be an $r$-uniform hypergraph, let $\gamma > 1$, and let $\alpha > r - 1/m_r(H)$. Suppose that $M$ is such that for every $s \in \{1, \dotsc, n\}$,
  \[
    \ex(s,H) \le M \cdot \left(\frac{s}{n}\right)^\alpha.
  \]
  Then there exists a constant $t_0$ depending only on $\alpha$, $\gamma$, and $H$ such that the following holds.
  If $G$ is an $n$-vertex $r$-uniform hypergraph with
  \[
    \gamma^t M < e(G) \le \gamma^{t+1} M
  \]
  for some integer $t \ge t_0$, then there is a collection $\cH_G$ of copies of $H$ in $G$ for which, letting $b_t = \frac{M}{(t+1)^3}$,
  \begin{equation}
    \label{eq:Delta-ell-condition}
    \Delta_\ell(\cH_G) \le 2^{2e_H+3} \cdot \left(\frac{b_t}{e(G)}\right)^{\ell-1} \cdot \frac{e(\cH_G)}{e(G)}
  \end{equation}
  for every $\ell\in \{1, \dotsc, e_H\}$. In particular, Lemma~\ref{lem:containers} implies the existence of a constant $C$ depending only on $\alpha$ and $H$ such that $|\cF_n(H)| \le  2^{C\cdot M}$.
\end{thm}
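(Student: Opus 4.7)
The plan is a greedy construction of $\cH_G$. Set $T_\ell := 2^{2e_H+3}(b_t/e(G))^{\ell-1} N/e(G)$ for a target $N$ (equal to the lower bound on $e(\cH_G)$ implied by the $\ell = e_H$ case of \eqref{eq:Delta-ell-condition} together with $\Delta_{e_H}(\cH_G) \ge 1$, i.e.\ $N \asymp e(G)(e(G)/b_t)^{e_H-1}$). Call an $\ell$-subset $L \subseteq E(G)$ \emph{saturated} once $T_\ell$ chosen copies contain $L$, and call a copy of $H$ in $G$ \emph{admissible} if it contains no saturated $\ell$-subset for any $\ell \in \{1,\dots,e_H\}$. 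Starting with $\cH_G = \emptyset$, append admissible copies one at a time; the degree bounds in \eqref{eq:Delta-ell-condition} are enforced by construction, so the entire proof reduces to showing that the greedy procedure never gets stuck before reaching size $N$.

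The analytic heart of the argument is a supersaturation estimate in the spirit of Erd\H{o}s--Simonovits. For a uniformly random $s$-subset $S\subseteq V(G)$ with $s \asymp n\gamma^{-t/(r-\alpha)}$, one has $\Ex[e(G[S])] = e(G)\binom{s}{r}/\binom{n}{r} \gtrsim M\gamma^t(s/n)^r \gg M(s/n)^\alpha \ge \ex(s,H)$, so that, by a deletion argument, a constant fraction of $s$-subsets have $G[S]$ containing $\Omega(\ex(s,H))$ copies of $H$; double-counting the pairs (copy of $H$, covering $s$-subset) converts this into a lower bound on the total number of copies of $H$ in $G$. Combined with a pruning step---removing from $G$ those edges contained in disproportionately many copies of $H$, which trims at most a constant fraction of edges while forcing the maximum copy-degree of a single edge to be a constant times the average---the same averaging also gives a corresponding lower bound for the \emph{admissible}-copy count at each stage of the greedy process.

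To control the number of copies blocked by some saturated subset, I would use the hypothesis $\alpha > r - 1/m_r(H)$ through the structural inequality it encodes. Any $\ell$-edge subset of $E(G)$ lying in a common copy of $H$ embeds into $H$ as a sub-hypergraph $F$ with $e_F = \ell$, so the definition of $m_r(H)$ forces $v_F \ge r + (\ell-1)/m_r(H)$, whence the number of copies of $H$ in $K_n^{(r)}$ extending a fixed $\ell$-subset is $O\bigl(n^{v_H-r-(\ell-1)/m_r(H)}\bigr)$. Multiplying by the at most $O\bigl(e(G)^\ell/b_t^{\ell-1}\bigr)$ saturated $\ell$-subsets and summing over $\ell$, the blocked total becomes a geometric-type sum whose $n$-exponent at each $\ell \ge 2$ is $(\ell-1)(r - \alpha - 1/m_r(H)) < 0$ by hypothesis; the $\ell=1$ term is handled by the pruned copy-degree bound from the previous paragraph. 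The strict margin $\eta := 1/m_r(H) - (r-\alpha) > 0$, combined with choosing $t_0 = t_0(\alpha,\gamma,H)$ sufficiently large, absorbs both the $(t+1)^{3(\ell-1)}$ slack inherited from $b_t$ and all sub-exponential constants, making the blocked total strictly less than the admissible-copy count and closing the induction.

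The main obstacle is precisely the $\ell = 1$ case: the structural inequality from $m_r(H)$ is trivial for a single edge, so balanced distribution at $\ell = 1$ cannot come from $\alpha$ or $m_r(H)$ and must be engineered via the pruning/averaging step, which in turn relies on a delicate second-moment analysis of the random induced subhypergraph $G[S]$ to certify that removing high-degree edges leaves enough of $e(G)$ intact. A secondary and conceptually milder issue is that when the supremum in the definition of $m_r(H)$ is attained by a proper subgraph $F \subsetneq H$, the sampling lemma naturally produces copies of $F$ rather than of $H$; one then completes each $F$-copy into $\Omega(1)$ expected $H$-copies by a second averaging, using $\ex(n,F) \le \ex(n,H) \le An^\alpha$ so that the parameters above remain valid for $F$ in place of $H$.
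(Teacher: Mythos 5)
Your overall architecture matches the paper's: the greedy construction of $\cH_G$, the definition of ``saturated''/``bad'' $\ell$-subsets with thresholds calibrated to the target size $N$, and the Erd\H{o}s--Simonovits-style random induced subhypergraph $G[S]$ with $|S| = pn \asymp n\gamma^{-t/(r-\alpha)}$ are all exactly the right ingredients, and you have the correct sampling density. The gap is in how you certify that some admissible copy of $H$ remains to be added.

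You propose to upper-bound the number of copies of $H$ that are \emph{blocked} by some saturated $\ell$-subset via
\[
  \text{(\# saturated $\ell$-subsets)} \times \text{(\# copies of $H$ in $K_n^{(r)}$ extending a fixed $\ell$-subset)}
  \ \lesssim\ \frac{e(G)^\ell}{b_t^{\ell-1}} \cdot n^{\,v_H - r - (\ell-1)/m_r(H)},
\]
and to compare this against the good-copy count $Z$ produced by supersaturation. Writing $\gamma^t = n^\beta$, supersaturation gives $Z \gtrsim M\,\gamma^{t(v_H-\alpha)/(r-\alpha)}$, so the ratio of the $\ell$-th blocked term to $Z$ has $n$-exponent
\[
  \beta\!\left(\ell - \tfrac{v_H-\alpha}{r-\alpha}\right) + v_H - r - \tfrac{\ell-1}{m_r(H)}.
\]
Your formula $(\ell-1)\bigl(r-\alpha-1/m_r(H)\bigr)$ is the value of this exponent only at the very top of the admissible range, $\beta = r-\alpha$, i.e.\ when $e(G) = \Theta(n^r)$. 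But the theorem must hold for all $t\ge t_0$, in particular for $t = t_0$, where $\gamma^t$ is a fixed constant and $\beta \to 0$. In that regime the exponent becomes $v_H - r - (\ell-1)/m_r(H)$, which is \emph{nonnegative} for every $\ell \le e_H$ (and strictly positive for $\ell < e_H$, since $m_r(H) \ge (e_H-1)/(v_H-r)$), so your blocked-count upper bound exceeds $Z$ by a polynomial factor in $n$ precisely where the argument must start. The $K_n^{(r)}$-extension bound is simply too crude when $G$ is sparse and $t$ is small, and neither the $\ell=1$ pruning nor the ``second averaging'' repairs this, since the failure occurs already at $\ell = 2$. (The pruning also introduces a new hypergraph $G^*$ for which you would have to re-derive the supersaturation and verify the containers hypotheses with $e(G^*)$ in place of $e(G)$ -- extra work the paper's argument never needs.)

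The paper sidesteps the blocked-count estimate entirely. Rather than counting copies of $H$ blocked by bad $F$-copies in $G$, it deletes one edge from each bad $F$-copy appearing in the random sample $G' = G[S]$, producing $G''$. Every copy of $H$ in $G''$ is automatically good, so one needs only the inequalities
\[
  Z p^{v_H} \ge \Ex[X] \ge \Ex[e(G'')] - \ex(pn, H), \qquad
  \Ex[e(G'')] \ge \Ex[e(G')] - \sum_{F\subsetneq H} |\cB_F(\cH_i)|\, p^{v_F}.
\]
The crucial point is that the bad copies are attenuated by $p^{v_F}$ \emph{because they must fall inside the sample} -- a factor your direct counting has no analogue of. Once $p$ is chosen so that $p^{r-v_F} \ge (8(t+1)^3\gamma^{t+1})^{e_F-1}$ for all $F\subsetneq H$ (which is where $\alpha > r - 1/m_r(H)$ is used), the bad-copy term is at most $\tfrac{1}{4}\Ex[e(G')]$, the extremal term is at most $\tfrac{1}{8}\Ex[e(G')]$, and $Z \ge N$ follows at once, uniformly in $t\ge t_0$, for every $\ell$ (including $\ell=1$ and unbalanced $H$) with no extra machinery. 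You should replace your blocked-copy estimate with this deletion-in-the-sample step; the rest of your outline then goes through.
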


\begin{proof}
  Let $\cH$ denote the hypergraph with vertex set $E(G)$ comprising all copies of $H$ in $G$. We shall construct an $\cH_G \subseteq \cH$ from an initially empty hypergraph by adding to it copies of $H$ one by one, in a sequence of $N$ steps ($N$ to be chosen shortly). We shall do it in such a way that after $N$ steps, the obtained hypergraph $\cH_G$ will have exactly $N$ edges and will satisfy~\eqref{eq:Delta-ell-condition}.

  Let $m = e(G)$. Since we will add each copy of $H$ to $\cH_G$ only once, we will have $\Delta_{e_H}(\cH_G) = 1$ and thus, isolating $e(\cH_G)$ in~\eqref{eq:Delta-ell-condition} with $\ell = e_H$, the number of edges that we have to add to $\cH_G$ satisfies
  \[
    N \ge \left(\frac{m}{b_t}\right)^{e_H-1}\cdot 2^{-2e_H-3} \cdot m.
  \]
  In particular, choosing
  \[
    N := \left(\frac{\gamma^{t+1} M}{b_t}\right)^{e_H-1} \cdot m = \left((t+1)^3 \cdot \gamma^{t+1}\right)^{e_H-1} \cdot m,
  \]
  we will guarantee that~\eqref{eq:Delta-ell-condition} holds for $\ell = e_H$.

  We now make the above discussion precise. We shall construct a sequence $(\cH_i)_{i=0}^N$ of subhypergraphs of $\cH$ such that $\cH_i \subseteq \cH_{i+1}$ and $e(\cH_i) = i$ for each $i$ and let $\cH_G = \cH_N$. We let $\cH_0$ be the empty hypergraph. Suppose that we have already defined $\cH_i$ for some $i \in \{0, \dotsc, N-1\}$. Our goal is not only to find some copy of $H$ in $\cH \setminus \cH_i$ to be added to $\cH_i$ in order to form $\cH_{i+1}$, but also to choose this copy carefully so that at the end of the process, condition~\eqref{eq:Delta-ell-condition} is satisfied for every $\ell$. To this end, for every nonempty $F \subsetneq H$, we let $\cB_F(\cH_i)$ denote the collection of `bad' copies of $F$ in $G$ in the sense that they are already `saturated' in $\cH_i$. That is, the $\cH_i$-degree of the set of $e(F)$ edges of $G$ that form this copy of $F$ is close to violating the bound~\eqref{eq:Delta-ell-condition}, with $\ell = e(F)$. More precisely, given $F' \subseteq V(\mathcal H_i)$, we define
  \[
    \deg_{\cH_i}F' = \left|\left\{E\in E(\mathcal H_i)\colon F'\subseteq E\right\}\right|,
  \]
  and let
  \[
    \cB_F(\cH_i)=\left\{ F' \subseteq G \colon F' \simeq F \text{ and } \deg_{\cH_i} F' \ge 2^{2e_H+2} \cdot \left((t+1)^3 \cdot \gamma^{t+1}\right)^{1-e_F} \cdot \frac{N}{m} \right\}.
  \]
  Observe that
  \[
    2^{e_H} \cdot N \ge \binom{e_H}{e_F} \cdot e(\cH_i) \ge |\cB_F(\cH_i)| \cdot 2^{2e_H+2} \cdot \left((t+1)^3 \cdot \gamma^{t+1}\right)^{1-e_F} \cdot \frac{N}{m}
  \]
  and therefore,
  \begin{equation}
    \label{eq:B-F-upper}
    |\cB_F(\cH_i)| \le 2^{-e_H-2} \left((t+1)^3 \cdot \gamma^{t+1}\right)^{e_F-1} \cdot m.
  \end{equation}
  Suppose that there exists an $E \in \cH$ such that $F' \notin \cB_F(\cH_i)$ for every nonempty $F \subsetneq H$ and every $F \simeq F' \subsetneq E$. Call each such $E$ \emph{good}, assuming that $i$ is fixed. If there is a good $E$ that is not already in $\cH_i$, then letting $\cH_{i+1} = \cH_i \cup \{E\}$ guarantees that for every $\ell \in [e_H-1]$,
  \[
    \begin{split}
      \Delta_\ell(\cH_{i+1}) & \le \max\left\{ \Delta_\ell(\cH_i), \; 2^{2e_H+2} \cdot \left((t+1)^3 \cdot \gamma^{t+1}\right)^{1-\ell} \cdot \frac{N}{m} + 1\right\} \\
      & \le \max\left\{ \Delta_\ell(\cH_i), \; 2^{2e_H+3} \cdot \left((t+1)^3 \cdot \gamma^{t+1}\right)^{1-\ell} \cdot \frac{N}{m} \right\} \\
      & \le \max\left\{ \Delta_\ell(\cH_i), \; 2^{2e_H+3} \cdot \left(\frac{b_t}{m}\right)^{\ell-1} \cdot \frac{N}{m} \right\},
    \end{split}
  \]
  where the second inequality holds because
  \[
    \left((t+1)^3 \cdot \gamma^{t+1}\right)^{1-\ell} \cdot \frac{N}{m} = \left((t+1)^3 \cdot \gamma^{t+1}\right)^{e_H-\ell} \ge 1
  \]
  and the last inequality uses the definition of $b_t$ and the bound $m \le \gamma^{t+1} M$. In particular, by the definition of $N$, if we succeed in finding such a good $E \in \cH \setminus \cH_i$ for every $i$, then the final hypergraph $\cH_G = \cH_N$ will satisfy~\eqref{eq:Delta-ell-condition} for every $\ell \in [e_H]$.

  Fix some $p \in (0, 1]$ such that $p n$ is an integer and let $R$ be a uniformly chosen random subset of $p n$ vertices of $G$. Denote by $G'$ the subgraph of $G$ induced by $R$. Let $G''$ be a graph obtained from $G'$ by removing one edge from each copy of $F$ in $G'$ that belongs to $\cB_F(\cH_i)$, for every nonempty $F \subsetneq H$. Note that any copy of $H$ in $G''$ is good by definition. Let $X$ denote the (random) number of good copies of $H$ in $G''$ and let $Z$ be the total number of good copies of $H$ in $G$. Even though we might have accidentally eliminated some good copies of $H$ in $G'$ while forming the subgraph $G''$, it is still true that
  \[
    \Ex[X] \le Z \cdot \binom{n-v_H}{pn-v_H} / \binom{n}{pn} = Z \cdot \binom{pn}{v_H} / \binom{n}{v_H} \le Z \cdot p^{v_H}.
  \]
  Since every copy of $H$ in $G''$ is good and $G''$ has $pn$ vertices, then
  \[
    X \ge e(G'') - \ex(pn, H) \ge e(G'') - M \cdot p^\alpha.
  \]
  Since clearly
  \[
    e(G'') \ge e(G') - \sum_{F \subsetneq H} \sum_{F' \in \cB_F(\cH_i)} \indicator[F' \subseteq G'],
  \]
  and for every $F' \subseteq G$ with $F' \simeq F$, we have $\Pr(F' \subseteq G') = \binom{n-v_F}{pn - v_F} / \binom{n}{pn} \le p^{v_F}$, it follows that
  \begin{equation}
    \label{eq:Z-lower-one}
      Z \cdot p^{v_H} \ge \Ex[X] \ge \Ex[e(G'')] - M \cdot p^\alpha \ge \Ex[e(G')] - \sum_{F \subsetneq H} |\cB_F(\cH_i)| \cdot p^{v_F} - M \cdot p^\alpha.
  \end{equation}
  Finally, if $pn \ge 2r^2$, then
  \[
    \begin{split}
      \Ex[e(G')] & = m \cdot \binom{n-r}{pn-r} / \binom{n}{pn}  = m \cdot \binom{pn}{r} / \binom{n}{r} \ge m \cdot \left(\frac{pn - r}{n}\right)^r = m \cdot p^r \cdot \left(1 - \frac{r}{pn}\right)^r \\
      & \ge m \cdot p^r \cdot \left(1 - \frac{r^2}{pn}\right) \ge \frac{m \cdot p^r}{2},
    \end{split}
  \]
  which substituted into~\eqref{eq:Z-lower-one} yields
  \begin{equation}
    \label{eq:Z-lower-two}
      Z \cdot p^{v_H} \ge \frac{m \cdot p^r}{2} - \sum_{F \subsetneq H} |\cB_F(\cH_i)| \cdot p^{v_F} - M \cdot p^\alpha.
  \end{equation}

  We claim that there is a $p \in [2r^2/n, 1]$ such that $p n$ is an integer and the right-hand side of~\eqref{eq:Z-lower-two} is at least $N \cdot p^{v_H}$, and thus $Z \ge N$. Since $e(\cH_i) = i < N$, this inequality would imply that there is a good copy of $H$ in $G$ that does not belong to $\cH_i$, completing the proof. Hence, it suffices to establish this claim. To this end, note first that by~\eqref{eq:B-F-upper}, we have
  \begin{equation}
    \label{eq:B-F-p-vF-upper}
    \sum_{F \subsetneq H} |\cB_F(\cH_i)| \cdot p^{v_F} \le \frac{m}{4} \cdot \max\left\{ \left((t+1)^3 \cdot \gamma^{t+1}\right)^{e_F-1} \cdot p^{v_F} \colon F \subsetneq H \right\}.
  \end{equation}
  Thus it suffices to have the following three inequalities for every $F \subsetneq H$:
  \begin{eqnarray}
    \label{eq:p-r-vF}
    p^{r - v_F} & \ge & \left((t+1)^3 \cdot \gamma^{t+1} \right)^{e_F-1}, \\
    \label{eq:p-alpha-r}
    p^{\alpha - r} & \le & \frac{\gamma^t}{8} \le \frac{m}{8M}, \\
    \label{eq:p-r-vH}
    p^{r - v_H} & \ge & 8 \left((t+1)^3 \cdot \gamma^{t+1}\right)^{e_H-1} = \frac{8N}{m}.
  \end{eqnarray}
  Indeed, combining inequalities~\eqref{eq:Z-lower-two}, \eqref{eq:B-F-p-vF-upper}, \eqref{eq:p-r-vF}, and~\eqref{eq:p-alpha-r} yields
  \[
    Z \cdot p^{v_H} \ge \frac{m \cdot p^r}{8},
  \]
  which combined with~\eqref{eq:p-r-vH} gives the desired lower bound on $Z$. Note also that both~\eqref{eq:p-r-vF} and~\eqref{eq:p-r-vH} would follow if the following was true for every $F \subsetneq H$:
  \begin{equation}
    \label{eq:p-r-vF-final}
    p^{r - v_F} \ge \left(8 \cdot (t+1)^3 \cdot \gamma^{t+1} \right)^{e_F-1}.
  \end{equation}
  Observe that~\eqref{eq:p-alpha-r} holds trivially for all large enough $t$ if $\alpha = r$. Moreover, \eqref{eq:p-r-vF-final} holds when $v_F = r$, as then $e_F = 1$. Hence, we may assume that $\alpha < r$ and verify~\eqref{eq:p-r-vF-final} only for all $F \subsetneq H$ with $e_F > 1$.

  It is not hard to see that it suffices to show is that for all $F \subsetneq H$ with $e_F > 1$,
  \begin{equation}
    \label{eq:condition-p-final}
    2 \cdot \left(8 \cdot (t+1)^3 \cdot \gamma^{t+1} \right)^{\frac{e_F-1}{v_F-r}} \le \min\left\{ \left(\frac{\gamma^t}{8}\right)^{\frac{1}{r - \alpha}}, \frac{n}{2r^2} \right\}
  \end{equation}
  Indeed, if~\eqref{eq:condition-p-final} holds, then every $p$ in some interval $[p_0, 2p_0] \subseteq [2r^2/n, 1]$ satisfies both~\eqref{eq:p-alpha-r} and~\eqref{eq:p-r-vF-final}. Clearly, this interval contains a $p$ such that $pn$ is an integer. The first of the two inequalities in~\eqref{eq:condition-p-final} holds for all large $t$, as $\gamma > 1$ and by our hypothesis
  \begin{equation}
    \label{eq:alpha-mrH-assumption}
    \frac{e_F-1}{v_F-r} \le m_r(H) < \frac{1}{r-\alpha}.
  \end{equation}
  To see that the second inequality in~\eqref{eq:condition-p-final} holds as well, note first that
  \[
    1 = \ex(r, H) \le M \cdot \left(\frac{r}{n}\right)^\alpha  \le \frac{e(G)}{\gamma^t} \cdot \left(\frac{r}{n}\right)^\alpha \le \frac{n^r}{\gamma^t} \cdot \left(\frac{r}{n}\right)^\alpha,
  \]
  and hence $t \le \frac{r \log n}{\log \gamma}$. It follows that
  \[
    \left(8 \cdot (t+1)^3 \cdot \gamma^{t+1} \right)^{\frac{e_F-1}{v_F-r}} \le \left(\frac{16 \cdot r^{\alpha+3} \cdot \gamma}{(\log \gamma)^3} \cdot n^{r-\alpha}  \cdot (\log n)^3 \right)^{\frac{e_F-1}{v_F-r}} \le \frac{n}{4r^2},
  \]
  provided that $t$ is sufficiently large (and thus $n$ is sufficiently large), since $(r-\alpha) \cdot \frac{e_F-1}{v_F-r} < 1$ by our hypothesis, see~\eqref{eq:alpha-mrH-assumption}. This completes the proof.
\end{proof}

\section{Proofs of Theorems \ref{thm:main-graphs} and \ref{thm:main}}
\label{sec: main theorems}
In this section we prove Theorems \ref{thm:main-graphs} and \ref{thm:main}. Both will be obtained as (more or less) immediate corollaries of our technical Theorem \ref{thm:mainNew}.

\begin{proof}[Proof of Theorem \ref{thm:main}]
  Let $\alpha>r-1/m_r(H)$ and let $A$ be such that
  \[
    \exnH \le An^\alpha
  \]
  for all $n$. Define $M=An^{\alpha}$ and observe that for all $s\in [n]$,
  \[
    \ex(s,H)\le As^{\alpha}=\left(\frac{s}{n}\right)^{\alpha}\cdot An^{\alpha}.
  \]
  Therefore, Theorem~\ref{thm:mainNew} implies the existence of some $C>0$ such that
  \[
    |\FnH|\le 2^{C\cdot \exnH},
  \]
  as claimed.
\end{proof}

Using a standard probabilistic argument, one can show that for every $r$-uniform hypergraph $H$ with at least two edges, the bound $\exnH \ge c_Hn^{r-1/m_r(H)}$ holds for some positive constant $c_H$.  In particular, if $\exnH \le An^\alpha$ for all $n$, as in the statement of Theorem~\ref{thm:main}, then $\alpha \ge r - 1/m_r(H)$. It turns out that when $H$ is a graph that contains a cycle, the stronger lower bound
\begin{equation}
  \label{eq:Bohman-Keevash}
  \exnH \ge c_H n^{2-1/m_2(H)} (\log n)^{1/(e_H-1)}
\end{equation}
holds for all $n$. This was first proved by Bohman and Keevash~\cite{BoKe10} and later generalized to hypergraphs of higher uniformity by Bennett and Bohman~\cite{BeBo16}.

\begin{proof}[Proof of Theorem~\ref{thm:main-graphs}]
  Suppose that $H$ contains a cycle and $\alpha$ and $A$ are such that $\exnH \le An^{\alpha}$ for all $n$. It follows from~\eqref{eq:Bohman-Keevash} that $\alpha > 2-1/m_2(H)$. The assertion of the theorem now easily follows from Theorem~\ref{thm:main}.
\end{proof}

In Appendix~\ref{sec:lower-bounds-FnH}, we revise an old argument of Kohayakawa, Kreuter, and Steger~\cite{KoKrSt98} to derive a stronger form of~\eqref{eq:Bohman-Keevash} from a version of the famous result of Ajtai, Koml\'os, Pintz, Spencer, and Szemer\'edi~\cite{AjKoPiSpSz82} due to Duke, Lefmann, and R\"odl~\cite{DuLeRo95}.

\section{Proofs of Theorems \ref{thm:infiniteSequence} and \ref{thm:infiniteSequenceHyper}}
\label{sec:infinite}

In this section, we prove Theorems~\ref{thm:infiniteSequence} and~\ref{thm:infiniteSequenceHyper}. That is, we show that if $\exnH$ exceeds the standard probabilistic lower bound of $c_H n^{r-1/m_r(H)}$ by a factor polynomial in $n$, then Theorem~\ref{thm:mainNew} implies that $|\FnH| \le 2^{C \cdot \exnH}$ for infinitely many $n$. Since Theorem~\ref{thm:infiniteSequence} is simply the case $r=2$ of Theorem~\ref{thm:infiniteSequenceHyper}, we only prove the latter.

\begin{proof}[Proof of Theorem~\ref{thm:infiniteSequenceHyper}]
  Let $H$ be an $r$-uniform hypergraph and suppose that there is an $\eps > 0$ such that
  \[
    \exnH \ge n^{r-1/m_r(H)+\eps}
  \]
  for all $n$. We shall construct an infinite sequence of $n$ satisfying the hypothesis of Theorem~\ref{thm:mainNew} with $M = \exnH$ and $\alpha = r-1/m_2(H) + \eps/2$. Then, for each $n$ in the sequence, we obtain
  \[
    |\FnH|\le 2^{C\cdot \exnH}
  \]
  for some $C$ that depends only on $\alpha$, $\eps$, and $H$. This will complete the proof.

  Assume towards a contradiction that there are only finitely many $n$ satisfying the hypothesis of Theorem~\ref{thm:mainNew}. In particular, there exists an $N$ such that for all $n_0\ge N$,
  \[
    \ex(n_1,H) > \ex(n_0,H) \cdot \left(\frac{n_1}{n_0}\right)^{\alpha}
  \]
  for some $n_1 < n_0$. Choose as small $\delta > 0$, let $n_0 = \lceil N^{1/\delta} \rceil$, and suppose that we have defined $n_0, \dotsc, n_{k-1}$ this way. If $n_{k-1} \ge n_0^\delta \ge N$, then there is some $n_k \le n_{k-1} - 1 \le n_0 - k$ such that
  \[
    \ex(n_k, H) > \ex(n_{k-1}, H) \cdot \left(\frac{n_k}{n_{k-1}}\right)^{\alpha} > \ex(n_0,H) \cdot \left(\frac{n_k}{n_{k-1}}\right)^\alpha \cdot \left(\frac{n_{k-1}}{n_0}\right)^\alpha.
  \]
  Note that if $n_k \le n_0^{\delta}$, then the lower bound $\exnH\ge \eps n^{r-1/m_r(H)+\eps}$ implies
  \[
    n_0^{r\delta} \ge n_k^r > \binom{n_k}{r}\ge \ex(n_k, H) > \ex(n_0,H) \cdot n_0^{(\delta - 1)\alpha} \ge \eps n_0^{\alpha+\eps/2} \cdot n_0^{(\delta-1)\alpha} \ge \eps n_0^{\alpha \delta + \eps/2}.
  \]
  This is clearly impossible, as $\eps > 0$ is fixed and we may choose $N$ as large as we want and $\delta$ as small as we want. Therefore, there must be some $n \ge N$ for which the hypothesis of Theorem~\ref{thm:mainNew} holds, a contradiction.
	\end{proof}

\section{Concluding remarks}
\label{sec:remarks}

In order to prove that $|\FnH| \le 2^{C\exnH}$ for some $r$-uniform hypergraph $H$ using Theorem~\ref{thm:mainNew}, one needs to assume that $\exnH \ge \eps n^{r-1/m_r(H)+\eps}$ for some positive constant $\eps$. Indeed, one clearly needs $M = O(\exnH)$ and letting $s = r$ in the hypothesis of the theorem yields the bound $M = \Omega(n^\alpha)$, where $\alpha$ is a constant satisfying $\alpha > r-1/m_r(H)$. Some kind of separation of $\exnH$ from $n^{r-1/m_r(H)}$ is crucial for our approach to work. Even though one could most likely allow $\eps$ to tend to $0$ with $n$ at some rate, the lower bound on $|\FnH|$ proved by Corollary~\ref{cor:FnH-better-lower} shows that if $|\FnH| \le 2^{C\exnH}$, then the ratio of $\exnH$ to $n^{r-1/m_r(H)}$ has to be at least $(\log n)^{1+c_H}$ for some $c_H > 0$. The reason why we have not tried to weaken this separation assumption is that we believe that the following is true.

\begin{conj}
  \label{conj:separation}
  Let $H$ be an arbitrary graph that is not a forest. There exists an $\eps > 0$ such that $\exnH \ge \eps n^{2-1/m_2(H)+\eps}$ for all $n$.
\end{conj}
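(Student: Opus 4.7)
The plan is to reduce first to a 2-balanced subgraph of $H$ and then to obtain a polynomial improvement over the Bohman--Keevash bound via a random algebraic construction. I expect this approach to fall short of a full resolution, since Conjecture~\ref{conj:separation} appears to be genuinely at the frontier, but it is the most natural line of attack. For the reduction, let $F \subseteq H$ be an inclusion-minimal subgraph attaining the maximum in the definition of $m_2(H)$. Then $F$ is automatically 2-balanced (every proper subgraph of $F$ on more than one vertex has strictly smaller 2-density) and $m_2(F) = m_2(H)$. Since every $F$-free graph is in particular $H$-free, we have $\ex(n,H) \ge \ex(n,F)$, and it therefore suffices to prove the conjecture under the assumption that $H$ itself is 2-balanced.

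For the construction, I would attempt to adapt the random algebraic method of Bukh and Bukh--Conlon~\cite{BuCo}. Pick a prime power $q$ with $q^d = \Theta(n)$ for a suitable integer $d$, and let $f \colon \mathbb{F}_q^d \times \mathbb{F}_q^d \to \mathbb{F}_q^s$ be a random symmetric system of polynomials of bounded degree. Define a graph $G$ on $V = \mathbb{F}_q^d$ by joining distinct $u,v$ whenever $f(u,v) = 0$. A first-moment calculation gives $\Ex[e(G)] = \Theta(n^{2-s/d})$, and the heart of the method is to show that for every proper subgraph $F' \subsetneq H$ the number of algebraically forced copies of $F'$ is small, so that the expected number of copies of $H$ in $G$ is $o(\Ex[e(G)])$ once $s/d$ lies in the correct range. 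Setting $s/d = 1/m_2(H) - \eps$ for a tiny $\eps > 0$ and then deleting one edge from each copy of $H$ would yield an $H$-free graph with $\Theta(n^{2-1/m_2(H)+\eps})$ edges, as required.

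The principal obstacle is that the Bukh--Conlon machinery is only known to realize the exponent $2 - 1/m_2(\mathcal{L})$ for finite \emph{families} $\mathcal{L}$ rather than for a single graph: the zero set of a random polynomial map invariably contains low-complexity configurations---affine flats, linear pencils, fixed subvarieties---which can host ``parasitic'' copies of dense auxiliary subgraphs and force copies of $H$ well beyond the generic first-moment count. Eliminating these uniformly for an arbitrary 2-balanced $H$, whether by a cleverer choice of the random polynomial family, a hybrid construction that layers a Bohman--Keevash-style nibble on top of an algebraic skeleton, or an entirely new combinatorial idea, is where the genuine work lies. A natural intermediate target, which seems considerably more amenable, is to strengthen the $(\log n)^{1/(e_H-1)}$ factor in~\eqref{eq:Bohman-Keevash} to $(\log n)^\omega$ for arbitrary fixed $\omega > 0$, for instance by iterating the random nibble in the Bohman--Keevash argument; even this weaker separation would already constitute nontrivial progress toward Conjecture~\ref{conj:separation}.
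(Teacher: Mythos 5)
The statement you are addressing is Conjecture~\ref{conj:separation}, which the paper poses as an \emph{open} conjecture and does not prove; there is therefore no proof in the paper to compare against. You recognize this correctly: your submission is an honest sketch of a line of attack together with an explicit acknowledgment that it falls short, not a claimed proof.

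Within that framing, what you do claim is sound. The reduction is correct: taking $F \subseteq H$ inclusion-minimal among subgraphs attaining $m_2(H)$, one gets $m_2(F) = m_2(H)$, and since any $F'' \subsetneq F$ with $v_{F''} > 2$ attaining the same density would contradict minimality, $F$ is \emph{strictly} $2$-balanced (note: the relevant threshold is $v > 2$ vertices, not ``more than one vertex'' as you wrote, and ``strictly $2$-balanced'' is the property you want). Since $F \subseteq H$, every $F$-free graph is $H$-free, so $\ex(n,H) \ge \ex(n,F)$, and since $H$ contains a cycle, $m_2(H) > 1$ forces $e_F > v_F - 1$, so $F$ is not a forest either; the conjecture for $H$ does follow from the conjecture for $F$. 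Your description of the Bukh--Conlon obstruction is also accurate: the random algebraic method is currently only known to realize the exponent $2 - 1/m_2$ after forbidding an entire finite \emph{family}, precisely because the zero set of a random polynomial map contains low-complexity subvarieties that force many copies of auxiliary dense graphs; removing those parasitic configurations while forbidding only the single graph $H$ is exactly the open core of the problem. Your proposed intermediate step --- improving the exponent $1/(e_H-1)$ of the logarithm in the Bohman--Keevash bound~\eqref{eq:Bohman-Keevash} to an arbitrary power --- would likewise be genuine progress, but it too is open and not something the nibble analysis is currently known to deliver. In short, there is no gap in what you assert, because you assert only a reduction (correct) and a plan (plausibly the right one, but incomplete, as you say); the paper itself leaves the statement as a conjecture, motivated in part by the observation after Conjecture~\ref{conj:separation} that it is implied by Conjecture~\ref{conj:weak}.
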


Note that Conjecture~\ref{conj:separation} is weaker than Conjecture~\ref{conj:weak}, since if $\exnH = \Theta(n^\alpha)$ for some constant $\alpha$, then necessarily $\alpha > 2-1/m_2(H)$ by Proposition~\ref{prop:ex-lower-bound-log}. Even though we believe that Conjecture~\ref{conj:separation} is interesting in its own right, additional motivation for it stems from Theorem~\ref{thm:infiniteSequence} -- any graph $H$ for which the conjecture holds has the property that $|\FnH| \le 2^{C\exnH}$ for infinitely many $n$.

\medskip
\noindent
\textbf{Acknowledgment.}
We would like to thank Misha Tyomkyn for a helpful discussion on degenerate hypergraph problems.

\bibliographystyle{amsplain}
\bibliography{bipartite-free}

\appendix

\section{Lower bounds for the number of $H$-free hypergraphs}
\label{sec:lower-bounds-FnH}

Recall that a hypergraph $\cH$ is \emph{linear} if every pair of distinct edges of $\cH$ intersects in at most one vertex. We shall use the following version of the famous result of Ajtai, Koml\'os, Pintz, Spencer, and Szemer\'edi~\cite{AjKoPiSpSz82} due to Duke, Lefmann, and R\"odl~\cite{DuLeRo95} to derive a lower bound on the extremal number of strictly $r$-balanced $r$-uniform hypergraphs with at least three edges.

\begin{thm}[\cite{DuLeRo95}]
  Let $k \ge 3$ and let $\cH$ be a $k$-uniform hypergraph with $\Delta(\cH) \le D$. If $\cH$ is linear, then
  \[
    \alpha(\cH) \ge c\cdot v(\cH) \cdot \left(\frac{\log D}{D} \right)^{\frac{1}{k-1}}
  \]
  for some constant $c$ that depends only on $k$.
\end{thm}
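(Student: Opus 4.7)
The plan is to use the R\"odl nibble (semi-random method), which is the standard approach for results of this type. The naive alteration bound—select each vertex independently with probability $p$ and then delete one vertex from each fully selected edge—yields only $\alpha(\cH) = \Omega(v(\cH) \cdot D^{-1/(k-1)})$; the $(\log D)^{1/(k-1)}$ improvement comes from iterating the random selection over many small rounds while exploiting the linearity of $\cH$, which is preserved under taking induced subhypergraphs.

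Partition the construction into $T = \Theta(\log D)$ rounds. In round $i$, maintain a surviving vertex set $V_i \subseteq V(\cH)$, the induced subhypergraph $\cH_i := \cH[V_i]$ with maximum degree $D_i$, and a partial independent set $I_i \subseteq V(\cH) \setminus V_i$; initialize $V_0 = V(\cH)$, $\cH_0 = \cH$, and $I_0 = \emptyset$. In each round, choose a carefully calibrated probability $p_i$ depending on $D_i$ and include each $v \in V_i$ independently in a trial set $T_i$ with probability $p_i$. Let $S_i \subseteq T_i$ consist of those $v \in T_i$ that do not lie in any edge of $\cH_i$ fully contained in $T_i$. Then $S_i$ is an independent set in $\cH_i$, and we set $I_{i+1} := I_i \cup S_i$ and $V_{i+1} := V_i \setminus (S_i \cup N_{\cH_i}(S_i))$. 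Calibrate $p_i$ so that the degree $D_i$ and the vertex count $|V_i|$ decay at the right relative rates and so that $T$ rounds suffice to drive $D_T$ down to $O(1)$ while retaining a substantial fraction of vertices; the residual bounded-degree hypergraph on $V_T$ then contributes an additional independent set of size $\Omega(|V_T|)$ via a trivial greedy argument.

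The core of the proof is tracking the two quantities $D_i$ and $|V_i|$ throughout the process and establishing that both concentrate tightly around their expectations via either a martingale (Azuma--Hoeffding) bound or the Kim--Vu polynomial concentration inequality. Linearity enters crucially at two points: first, it makes the ``bad event'' that a given edge through a vertex $v$ is fully sampled behave near-independently across the edges through $v$, since any two such edges meet only in $v$, enabling a clean estimate of $\Ex[|S_i|]$ via a product rather than a messy inclusion--exclusion; second, the codegree-one bound that linearity provides is what controls the second-order terms in the concentration estimates, preventing pairs of correlated events from swamping the main contribution. Summing the contributions $|S_0| + |S_1| + \dots + |S_{T-1}|$ together with the residual independent set in $V_T$ yields the desired lower bound $\Omega(v(\cH) \cdot (\log D / D)^{1/(k-1)})$.

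The main obstacle is the delicate parameter calibration: unlike the graph case $k = 2$, where the choice $p_i = \beta / D_i$ directly produces the sharp bound $\Omega(n \log D / D)$ through a straightforward per-round calculation, the hypergraph case requires balancing the selection probability (which must be large enough to yield a substantial $|S_i|$) against the resulting vertex-loss rate (which must be small enough to keep $|V_i|$ from collapsing prematurely), while ensuring that the maximum degree actually decreases round-over-round. Establishing uniform concentration of the per-vertex degrees across all $\Theta(\log D)$ rounds—so that the recursive invariant on $D_i$ continues to hold and the linear structure of $\cH_i$ remains usable—is the technically demanding step, and is where the linearity hypothesis is indispensable.
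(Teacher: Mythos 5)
This theorem is stated in the paper as a cited result of Duke, Lefmann, and R\"odl \cite{DuLeRo95}; the paper does not prove it, so there is no in-paper argument to compare against. Your instinct to use the semi-random (nibble) method is correct in spirit --- that is indeed the family of techniques behind this theorem and its AKPSS predecessor --- but the specific iteration you describe has a quantitative flaw that is fatal precisely in the regime $k \ge 3$ that the theorem addresses.

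The problem is the update $V_{i+1} := V_i \setminus (S_i \cup N_{\cH_i}(S_i))$, i.e., deleting the \emph{entire} hypergraph-neighborhood of $S_i$. Each vertex of a $k$-uniform hypergraph with maximum degree $D_i$ has at most $(k-1)D_i$ neighbors, so $|N_{\cH_i}(S_i)| \le (k-1) D_i |S_i| \approx (k-1) D_i p_i |V_i|$. To avoid destroying all of $V_i$ in a single round you are forced to take $p_i \lesssim 1/((k-1)D_i)$, which caps each round's contribution at $|S_i| \lesssim |V_i| / ((k-1) D_i)$. Summing over $\Theta(\log D)$ rounds (and being generous by pretending $|V_i|$ stays $\Theta(v(\cH))$ and $D_i \approx D$ throughout) yields at best $O\bigl(v(\cH) \cdot \log D / D\bigr)$. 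For $k = 2$ this matches $v(\cH)\cdot(\log D/D)^{1/(k-1)}$, but for $k \ge 3$ it is smaller than the claimed bound by the polynomial factor $D^{1-1/(k-1)}/(\log D)^{1-1/(k-1)}$. So the construction you wrote down is the right one for the graph case $k=2$ and cannot reach the target for $k \ge 3$. Tellingly, you yourself note that $k=2$ ``directly produces the sharp bound $\Omega(n\log D/D)$'' --- but the $k\ge 3$ theorem is asking for something much larger than $n\log D/D$, and full-neighborhood deletion structurally cannot produce it.

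The fix, and the point where linearity genuinely earns its keep in the Duke--Lefmann--R\"odl argument, is that one must not delete all of $N_{\cH_i}(S_i)$. For independent sets the only vertices that must be removed are those $u$ for which some edge $e \ni u$ already has its other $k-1$ vertices inside the accumulated independent set; there is no harm in two members of $I$ sharing an edge as long as the edge is not completely filled. Linearity (every two vertices lie in at most one common edge) is exactly what controls how many such ``almost-completed'' edges can exist and keeps the codegree-type correction terms small, so that $p_i$ can be taken of order $D_i^{-1/(k-1)}$ rather than $D_i^{-1}$, and the per-round gain is of order $|V_i| D_i^{-1/(k-1)}$ rather than $|V_i|/D_i$. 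That is the calculation you would need to carry out (together with the concentration estimates you correctly flag), and it is where your sketch currently has a gap rather than merely an omitted computation.
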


Recall that an $r$-uniform hypergraph $H$ is \emph{strictly $r$-balanced} if for every $F \subsetneq H$ with at least two edges,
\[
  \frac{e_F - 1}{v_F - r} < \frac{e_H - 1}{v_H - r}.
\]
Our result (and its proof) is a fairly straightforward generalization of~\cite[Theorem 8]{KoKrSt98}, which established the same result in the special case when $H$ is an even cycle. Before stating the result, we need the following definitions. Let $G^{(r)}_{n,m}$ denote an $r$-uniform hypergraph on $n$ vertices with exactly $m$ edges, chosen uniformly at random. Moreover, given two hypergraphs $G$ and $H$, we denote by $\ex(G,H)$ the maximum number of edges in an $H$-free subhypergraph of $G$.

\begin{proposition}
  \label{prop:ex-lower-bound-log}
  Let $r \ge 2$ and suppose that $H$ is a strictly $r$-balanced $r$-uniform hypergraph with at least three edges. There exists a $\delta > 0$ such that for all $m \ge n^{r-1/m_r(H)}$, with probability at least one half,
  \[
    \ex(G_{n,m}^{(r)}, H) \ge \delta n^{r-1/m_r(H)} \left( \log \left(\frac{m}{n^{r-1/m_r(H)}}\right) \right)^{\frac{1}{e_H-1}}.
  \]
\end{proposition}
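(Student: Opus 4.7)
The plan is to encode the $H$-free subhypergraph problem of $G := G^{(r)}_{n,m}$ as an independent set problem in an auxiliary hypergraph and then invoke the Duke--Lefmann--R\"odl (DLR) theorem stated above. Write $p := m/\binom{n}{r}$ and $t := m/n^{r - 1/m_r(H)}$, so that $p \asymp t \cdot n^{-1/m_r(H)}$ and $t \ge 1$ by hypothesis. Let $\cC = \cC(G)$ be the $e_H$-uniform hypergraph with $V(\cC) = E(G)$ whose edges are the edge sets of the copies of $H$ in $G$. Since independent sets in $\cC$ are exactly the $H$-free subhypergraphs of $G$, we have $\alpha(\cC) = \ex(G, H)$ and it suffices to lower bound $\alpha(\cC)$.

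The first step is a first-moment calculation. Strict $r$-balancedness forces the maximum in the definition of $m_r(H)$ to be attained by $H$ itself, so $v_H - r = (e_H - 1)/m_r(H)$, from which one obtains $\Ex[e(\cC)] \asymp n^{v_H} p^{e_H} \asymp t^{e_H} \cdot n^{r - 1/m_r(H)}$ and $\Ex[\Delta_1(\cC)] \asymp n^{v_H - r} p^{e_H - 1} \asymp t^{e_H - 1}$. Standard concentration (a switch between $G^{(r)}_{n,p}$ and $G^{(r)}_{n,m}$ together with Janson-type inequalities) will ensure that the actual values of $e(\cC)$ and $\Delta_1(\cC)$ are of this order with probability at least $3/4$. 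Crucially, for every proper subhypergraph $F \subsetneq H$ with $e_F \ge 2$, strict $r$-balancedness gives the strict inequality
\[
  \eta_F := (v_F - r) - \frac{e_F - 1}{m_r(H)} > 0,
\]
and a direct expectation computation shows that the expected number of ordered pairs of distinct copies of $H$ in $G$ whose edge-intersection is isomorphic to $F$ is
\[
  \asymp n^{2v_H - v_F} p^{2e_H - e_F} \asymp \Ex[e(\cC)] \cdot t^{e_H - e_F} \cdot n^{-\eta_F}.
\]
Summing over the finitely many isomorphism types of $F$ shows that the total number of `bad' pairs is $o(\Ex[e(\cC)])$, provided $t \le n^{\eta_0}$ for a sufficiently small constant $\eta_0 > 0$ that depends only on $H$ and $r$.

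Fixing such an $\eta_0$ and assuming $t \le n^{\eta_0}$, delete one copy of $H$ per bad pair to obtain a linear subhypergraph $\cC^\star \subseteq \cC$ with $v(\cC^\star) = m$, $e(\cC^\star) \ge e(\cC)/2$, and $\Delta_1(\cC^\star) \lesssim t^{e_H - 1}$. Applying DLR to $\cC^\star$ produces an independent set of size
\[
  \gtrsim m \cdot \left(\frac{\log(t^{e_H - 1})}{t^{e_H - 1}}\right)^{1/(e_H - 1)} \asymp n^{r - 1/m_r(H)} \cdot (\log t)^{1/(e_H - 1)}.
\]
A short cleanup step upgrades this set from an independent set in $\cC^\star$ to one in $\cC$: each copy of $H$ discarded during the linearisation that happens to lie in the DLR set is destroyed by removing one of its vertices, which costs only a constant factor because the bad pairs are few. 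For $t > n^{\eta_0}$, the bound follows by stochastic domination $G^{(r)}_{n,m} \supseteq G^{(r)}_{n,m'}$ (for $m \ge m'$) applied with $m' = n^{r - 1/m_r(H) + \eta_0}$, noting that $\log t \asymp \log n$ uniformly in the tail range with constants depending on $\eta_0$ and $1/m_r(H)$.

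The hardest part will be carrying out the linearisation and cleanup uniformly in $t$: the ratio $t^{e_H - e_F} \cdot n^{-\eta_F}$ can be close to $1$ for the problematic subhypergraphs $F$ with $v_F < v_H$ when $t$ approaches $n^{1/m_r(H)}$, and one must keep careful track of the quantitative strength of strict $r$-balancedness (namely, a uniform lower bound on $\min_F \eta_F/(e_H - e_F)$) to justify the case split via monotonicity. This mirrors the template of Kohayakawa, Kreuter, and Steger~\cite{KoKrSt98}, whose argument for $H = C_{2k}$ I expect to generalise without fundamental new ideas.
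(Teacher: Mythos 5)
Your overall plan---encode $H$-free subhypergraphs of $G^{(r)}_{n,m}$ as independent sets of an auxiliary $e_H$-uniform hypergraph, linearize it, apply Duke--Lefmann--R\"odl, restrict to a subpolynomial range of $t = m/n^{r-1/m_r(H)}$, and cover the tail by stochastic monotonicity---is exactly the paper's. Your first-moment calculations and the crucial use of strict $r$-balancedness to show bad pairs are negligible for small $\eta_0$ are also correct and match the paper (both proofs generalize \cite[Theorem 8]{KoKrSt98}, as you anticipated). There are, however, two points where your execution diverges from, and is weaker than, the paper's.

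First, your linearization deletes hyperedges of $\cC$ (copies of $H$), producing a non-induced spanning subhypergraph $\cC^\star$. This forces the cleanup step, since an independent set in $\cC^\star$ need not be independent in $\cC$. The paper instead deletes \emph{vertices} of $\cC$ (edges of $G$), one per bad pair. Since the number of bad pairs is $\ll pn^r \asymp m$ in the chosen range, this still retains $(1-o(1))m$ vertices and yields an \emph{induced} subhypergraph, so the resulting independent set is automatically independent in $\cC$ and no cleanup is needed. Your cleanup can be made to work, but only after carefully verifying that the number of discarded copies of $H$ is $o$ of the size of the DLR independent set (note the latter is $\asymp n^{r-1/m_r(H)}(\log t)^{1/(e_H-1)}$, which is much smaller than $e(\cC)\asymp t^{e_H}n^{r-1/m_r(H)}$, so ``the bad pairs are few'' relative to $e(\cC)$ is not by itself sufficient); this requires taking $\eta_0$ small in a way that is quantitatively coupled to the cleanup, whereas the paper's choice avoids the coupling entirely.

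Second, and more importantly, your assertion that ``standard concentration \dots will ensure that \dots $\Delta_1(\cC)$ [is] of this order with probability at least $3/4$'' is a genuine gap. The quantity $n^{v_H-r}p^{e_H-1}\asymp t^{e_H-1}$ is the \emph{typical} degree of a vertex of $\cC$, not the maximum degree; the maximum degree of $\cC$ (the largest number of copies of $H$ through a single edge of $G$) is governed by upper-tail behavior of subgraph counts in random graphs, which is notoriously heavy at these densities and does not concentrate at the mean scale in general. One cannot simply union-bound a Janson-type deviation inequality here. The paper avoids this issue entirely: it computes the expected degree of a fixed $r$-set, applies Markov's inequality to bound the probability that a fixed vertex of $\cC$ has degree exceeding $Cn^{v_H-r}p^{e_H-1}$ by $p/16$, and then a second Markov argument shows that with probability $\ge 3/4$ at most $m/4$ vertices have large degree; these are then deleted. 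Without an analogue of this deletion step, the hypothesis $\Delta(\cC^\star) \lesssim t^{e_H-1}$ that you feed into Duke--Lefmann--R\"odl is unjustified, and the proof as written does not close.
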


It is very likely that the case $r=2$ of Proposition~\ref{prop:ex-lower-bound-log} is implicit in the work of Bohman and Keevash~\cite{BoKe10}; one can stop the $H$-free process after only $m$ edges have been considered for addition. Moreover, it is possible that the general case can be proved using the techniques of Bennett and Bohman~\cite{BeBo16}, although one cannot invoke~\cite[Theorem~1.1]{BeBo16} directly, as this would require every edges of $G_{n,m}^{(r)}$ to be contained in the same number of copies of $H$. Finally, note that the assumption $e_H \ge 3$ is crucial. Indeed, suppose that $e_H = 2$ and the two edges of $H$ intersect in $\ell$ vertices. Then $m_r(H) = 1/(r-\ell)$ and it follows from the pigeonhole principle  that $\binom{r}{\ell} \cdot \exnH \le \binom{n}{\ell}$, proving that $\exnH \le n^{\ell} = n^{r-1/m_r(H)}$.

\begin{proof}
  By choosing $\delta$ small, we may assume that $n$ is sufficiently large. Moreover, given the dependence on $m$ of the claimed lower bound, we may assume that $m \le n^{r - 1/m_r(H) + \eps}$ for some small positive $\eps$ which we will specify later (for larger values of $m$, all we need to do is to multiply the obtained bound by a constant factor, depending only on $\varepsilon$). Let $\cH$ be the (random) $e_H$-uniform hypergraph whose vertex set is the edge set of $G_{n,m}^{(r)}$ and whose edges are the edge sets of all copies of $H$ in $G_{n,m}^{(r)}$. Clearly, $\ex(G_{n,m}, H)$ is just the independence number of $\cH$. Let $p = m/\binom{n}{r}$. Even though $\cH$ is not necessarily linear, we shall show that it contains an induced subhypergraph $\cH'$ with at least $m/2$ vertices that is linear and satisfies $\Delta(\cH') \le C n^{v_H-r}p^{e_H-1}$. This will allow us to conclude that
  \[
    \alpha(\cH) \ge \alpha(\cH') \ge c \frac{m}{2} \cdot \left(\frac{\log \left(Cn^{v_H-r}p^{e_H-1}\right)}{C n^{v_H-r} p^{e_H-1}}\right)^{\frac{1}{e_H-1}} \ge c' n^{r - 1/m_r(H)} \cdot \left(\log \left(\frac{m}{n^{r -1/m_r(H)}}\right) \right)^{\frac{1}{e_H-1}},
  \]
  where we have used
  \[
    n^{v_H-r}p^{e_H-1} = \Theta\left(\left(\frac{m}{n^{r-1/m_r(H)}}\right)^{e_H-1}\right).
  \]
  We find such an $\cH'$ using a simple deletion argument. We first compute the expected number of pairs of edges of $\cH$ that intersect in more than one vertex. We claim that
  \[
    \Ex[\text{\#pairs}] \le \sum_{\substack{F \subsetneq H \\ e(F) \ge 2}} p^{2e_H-e_F} n^{2v_H - v_F} \ll pn^r.
  \]
  The last inequality would follow if we showed that for every proper subgraph $F \subsetneq H$ with at least two edges,
  \begin{equation}
    \label{eq:mrH-vs-mrF}
    \left( p^{e_H} n^{v_H} \right)^2 \ll pn^r \cdot p^{e_F} n^{v_F}.
  \end{equation}
  Inequality~\eqref{eq:mrH-vs-mrF} holds provided that $\eps > 0$ is sufficiently small. Indeed, letting $p = n^{-\beta}$, one can check that~\eqref{eq:mrH-vs-mrF} is equivalent to
  \[
    2 \cdot \big(v_H - r - \beta \cdot (e_H-1)\big) < v_F-r-\beta \cdot (e_F-1).
  \]
  Moreover, when $\beta = 1/m_r(H) = (v_H-r) / (e_H-1)$, then the left-hand side is zero and our assumption that $m_r(F) < m_r(H)$ implies the right-hand side is positive. Thus, with probability at least $3/4$, we can delete one vertex of $\cH$ from every such pair, obtaining a linear induced subhypergraph $\cH''$ of $\cH$ with at least $3m/4$ vertices, say.

  Finally, fix any $A \in\binom{n}{r}$. By symmetry,
  \[
    \Ex[\deg_{\cH}A] = \Pr[A\in V(\cH)]\cdot \Ex[\text{average degree of }\cH] \le p\cdot \frac{e_H n^{v_H}p^{e_H}}{m} \le p \cdot \frac{Cn^{v_H-r}p^{e_H-1}}{16},
  \]
  provided that $C$ is a sufficiently large constant. By Markov's inequality,
   \[
   \Pr[\deg_{\cH}A > Cn^{v_H - r}p^{e_H-1}] < p/16
  \]
  By Markov's inequality, with probability at least $3/4$, the hypergraph $\cH$ contains at most $m/4$ vertices of degree exceeding $C n^{v_H-r} p^{e_H-1}$. In particular, with probability at least one half, we may delete them from $\cH''$ to obtain a linear induced subhypergraph $\cH'$ of $\cH$ with at least $m/2$ vertices and maximum degree at most $C n^{v_H-r} p^{e_H-1}$.
\end{proof}

\begin{cor}
  \label{cor:FnH-better-lower}
  Let $r \ge 2$ and suppose that $H$ is a strictly $r$-balanced $r$-uniform hypergraph with at least three edges. There exists a positive constant $c$ such that
  \[
    |\FnH| \ge \exp\left(c n^{r-1/m_r(H)} (\log n)^{\frac{e_H}{e_H-1}} \right).
  \]
\end{cor}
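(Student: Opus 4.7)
The plan is to apply Proposition~\ref{prop:ex-lower-bound-log} for a carefully chosen $m$, and then extract many distinct $H$-free subhypergraphs of $K_n^{(r)}$ from the collection of $m$-edge ``hosts'' that achieve the lower bound, via a simple double-counting argument. Fix a constant $\eps \in (0, 1/m_r(H))$ (depending only on $H$), and set $N := \binom{n}{r}$ and $m := \lceil n^{r - 1/m_r(H) + \eps}\rceil$. By Proposition~\ref{prop:ex-lower-bound-log} (noting that $\log(m/n^{r-1/m_r(H)}) = \Theta(\eps\log n)$), at least half of the $m$-edge subhypergraphs $G$ of $K_n^{(r)}$ satisfy $\ex(G, H) \ge K$, where $K := \lfloor \delta n^{r - 1/m_r(H)} (\eps \log n)^{1/(e_H-1)}\rfloor$ for the constant $\delta > 0$ supplied by that proposition. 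Let $\mathcal{G}$ denote this family, so $|\mathcal{G}| \ge \tfrac{1}{2}\binom{N}{m}$.

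For each $G \in \mathcal{G}$, select an arbitrary $H$-free subhypergraph $F(G) \subseteq G$ with exactly $K$ edges (possible since $\ex(G,H)\ge K$, by discarding edges from a maximum $H$-free subhypergraph). Each $F(G)$ is itself an $H$-free hypergraph on $[n]$, hence an element of $\FnH$. The key observation is that any fixed $F$ with $K$ edges can equal $F(G)$ only if $F \subseteq G$, and the number of $m$-edge supersets of a fixed $F$ in $K_n^{(r)}$ is exactly $\binom{N-K}{m-K}$. Hence the map $G\mapsto F(G)$ is at most $\binom{N-K}{m-K}$-to-one on $\mathcal{G}$, giving
\[
|\FnH|\ \ge\ \frac{|\mathcal{G}|}{\binom{N-K}{m-K}}\ \ge\ \frac{1}{2}\cdot\frac{\binom{N}{m}}{\binom{N-K}{m-K}}\ =\ \frac{1}{2}\cdot\frac{\binom{N}{K}}{\binom{m}{K}}\ \ge\ \frac{1}{2}\left(\frac{N}{m}\right)^{K},
\]
where the last step uses the identity $\binom{N}{K}/\binom{m}{K} = \prod_{i=0}^{K-1}(N-i)/(m-i)$ and the fact that each factor is bounded below by $N/m$ when $N \ge m$.

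It remains to substitute the parameters. Since $N = \Theta(n^r)$ and $m = \Theta(n^{r-1/m_r(H)+\eps})$ with $\eps < 1/m_r(H)$, one has $N/m = \Theta(n^{1/m_r(H)-\eps})$ and hence $\log(N/m) \ge c'\log n$ for some $c'>0$ depending only on $\eps$ and $H$. Combined with $K = \Theta(n^{r-1/m_r(H)}(\log n)^{1/(e_H-1)})$, this gives
\[
\log|\FnH|\ \ge\ K\cdot \log(N/m)-O(1)\ =\ \Omega\!\left(n^{r-1/m_r(H)}(\log n)^{1+1/(e_H-1)}\right)\ =\ \Omega\!\left(n^{r-1/m_r(H)}(\log n)^{e_H/(e_H-1)}\right),
\]
which is precisely the corollary's bound. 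The only mildly delicate point is recognising that the double counting turns $\binom{N}{m}/\binom{N-K}{m-K}$ into a factor of the form $(N/m)^K$, whose logarithm contributes the extra factor of $\log n$ beyond the $(\log n)^{1/(e_H-1)}$ supplied directly by Proposition~\ref{prop:ex-lower-bound-log}; the remaining arithmetic (including the routine check $K \le m$, which holds since $K/m = \Theta(n^{-\eps}(\log n)^{1/(e_H-1)})\to 0$) presents no obstacle.
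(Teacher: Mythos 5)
Your proposal is correct and takes essentially the same route as the paper: apply Proposition~\ref{prop:ex-lower-bound-log} at $m$ roughly $n^{r-1/m_r(H)+\eps}$, then double count $m$-edge hosts through their $H$-free subhypergraphs of size $K$, and simplify $\binom{N}{m}/\binom{N-K}{m-K}=\binom{N}{K}/\binom{m}{K}\ge (N/m)^K$ to extract the extra factor of $\log n$. You spell out a few routine steps (that the map is $\binom{N-K}{m-K}$-to-one, that $K\le m$) that the paper leaves implicit, but the argument is the same.
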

\begin{proof}
  Let $\gamma = (2m_r(H))^{-1}$, let $m = n^{r - 1/m_r(H) + \gamma}$, and let $\delta$ be the constant from the statement of Proposition~\ref{prop:ex-lower-bound-log}. The proposition implies that with probability at least one half,
  \begin{equation}
    \label{eq:exGnm-lower-bound}
    \ex(G_{n,m}^{(r)}, H) \ge \delta n^{r-1/m_r(H)} \left( \log \left(\frac{m}{n^{r-1/m_r(H)}}\right) \right)^{\frac{1}{e_H-1}} = \delta' n^{r-1/m_r(H)}  \left( \log n \right)^{\frac{1}{e_H-1}},
  \end{equation}
  where $\delta' = \delta \cdot \gamma^{1/(e_H-1)} > 0$. Denote the right-hand side of~\eqref{eq:exGnm-lower-bound} by $m'$. We have just shown that at least a half of all $r$-uniform hypergraphs with vertex set $[n]$ and $m$ edges contain an $H$-free subhypergraph with $m'$ edges. Now, a straightforward double-counting argument gives
  \[
    \begin{split}
      \left| \FnH \right| & \ge \frac{1}{2} \cdot \binom{\binom{n}{r}}{m} \cdot \binom{\binom{n}{r}-m'}{m-m'}^{-1} = \frac{1}{2} \cdot \binom{\binom{n}{r}}{m'} \cdot \binom{m}{m'}^{-1} \ge \frac{1}{2} \cdot \left(\frac{n^r}{2 \cdot r! \cdot m'}\right)^{m'} \cdot \left(\frac{m'}{em}\right)^{m'} \\
      & = \frac{1}{2} \cdot \left(\frac{n^\gamma}{2e \cdot r!}\right)^{m'} \ge \exp\left(c' m' \log n\right) = \exp\left(c n^{r-1/m_r(H)} (\log n)^{\frac{e_H}{e_H-1}} \right),
    \end{split}
  \]
  as claimed.
\end{proof}

\end{document}